\newcolumntype{L}{>{$}l<{$}} 
\renewcommand{\@biblabel}[1]{[#1]\hfill}
\newtheorem{theorem}{Theorem}[section]
\newtheorem{corollary}[theorem]{Corollary}
\newtheorem{lemma}[theorem]{Lemma}
\newtheorem{proposition}[theorem]{Proposition}
\newtheorem{conj}[theorem]{Conjecture}
\theoremstyle{definition}
\theoremstyle{remark}
\newenvironment{remark}
  {\pushQED{\qed}\remarkx}
  {\popQED\endremarkx}
\numberwithin{equation}{section}
\let\temp\phi
\let\phi\varphi
\let\varphi\temp
\newcommand{\oo}{\mathcal{O}}
\newcommand{\C}{\mathbb{C}}
\newcommand{\Q}{\mathbb{Q}}
\newcommand{\Z}{\mathbb{Z}}
\newcommand{\G}{\mathbb{G}}
\newcommand{\Proj}{\mathbb{P}}
\DeclareMathOperator{\et}{\acute{e}t}
\DeclareMathOperator{\br}{Br}
\DeclareMathOperator{\h}{H}
\DeclareMathOperator{\NS}{NS}
\DeclareMathOperator{\Gal}{Gal}
\DeclareMathOperator{\Spec}{Spec}
\DeclareMathOperator{\Hom}{Hom}
\DeclareMathOperator{\Pic}{Pic}
\DeclareMathOperator{\End}{End}
\DeclareMathOperator{\Jac}{Jac}
\DeclareMathOperator{\Aut}{Aut}
\DeclareMathOperator{\rk}{rk}
\DeclareFontFamily{U}{wncy}{}
\DeclareFontShape{U}{wncy}{m}{n}{<->wncyr10}{}
\DeclareSymbolFont{mcy}{U}{wncy}{m}{n}
\DeclareMathSymbol{\Sha}{\mathord}{mcy}{"58}
\newcommand{\pp}{\mathfrak{p}}
\newcommand{\qq}{\mathfrak{q}}
\newcommand{\legendre}[2]{\ensuremath{\left( \frac{#1}{#2} \right) }}
\renewcommand{\bar}{\overline}
\title{Transcendental Brauer groups of cubic generalised Kummer surfaces}
\author{Giorgio Navone}
\date{}
\begin{document}

\maketitle
\begin{abstract}
Given a cubic curve $C$ over a number field, we consider the K3 surface $Y_C$ constructed as the minimal desingularisation of the quotient of $C \times C$ by an automorphism of order 3. We relate the transcendental Brauer groups of $Y_C$ and $C \times C$, allowing us to explicitly compute the former group in the case of a diagonal cubic curve defined over $\Q$. We obtain conjectural insight on the existence of Galois cubic points over $\Q$ for everywhere locally soluble diagonal cubic curves.
\end{abstract}

\section{Introduction}

A natural approach to determine the existence or non-existence of rational points on a variety $X/\Q$ is to first check the existence of $\Q_p$-rational points for all $p \leq \infty$. Consider a family of varieties $\{X_i \}_{i \in I}$. If, for all $X_i$, we have $X_i(\Q) \neq \emptyset$ if and only if $X_i(\Q_p) \neq \emptyset$ for all $p \leq \infty$, then we say that the family $\{X_i \}$ satisfies the Hasse principle. This is indeed the case for quadrics and Severi--Brauer varieties, for instance, but in general the Hasse principle may fail. The failure is often explained by the Brauer--Manin obstruction; see Chapter 13 in $\cite{colliot2021brauer}$ for a precise formulation. The idea is that inside the set of adelic points $X(\mathbb{A}_{\Q})$ lies the so-called Brauer--Manin set $X(\mathbb{A}_{\Q})^{\br} \subset X(\mathbb{A}_{\Q})$, consisting of those adelic points that satisfy certain conditions imposed by the Brauer group. Class field theory shows that these conditions are always satisfied by the rational points $X(\Q)$. The key point is that emptiness of the Brauer--Manin set can give an obstruction to the existence of rational points, even when $X(\Q_p) \neq \emptyset$ for all $p\leq \infty$.

This approach to rational points justifies the interest in studying the Brauer groups of varieties, with the goal of computing them as abstract groups and describing their elements in a way amenable to the obstruction computation. The Brauer group $\br(X)$ is endowed with a natural filtration $\br_0(X) \subset \br_1(X) \subset \br(X)$, defined as  
\[\br_0(X)=\mathrm{Im}(\br(\Q) \to \br(X))  \subset  \br_1(X)=\mathrm{ker}(\br(X) \to \br(\bar{X}) ). \]
The elements in $\br_0(X)$ are called \emph{constant} elements and these will be discarded since they never yield obstructions to rational points. The \emph{algebraic} Brauer group $\br_1(X)$ can play a role in the obstruction and it is usually computed using the Hochschild--Serre spectral sequence, which provides the isomorphism $\br_1(X)/\br_0(X) \cong \h^1(\Q, \Pic(\bar{X}))$. 
Finally, the elements of $\br(X)/\br_1(X)$ are called \emph{transcendental} elements and they are troublesome to study, since knowledge of $\br(\bar{X})^{\Gamma_{\Q}}$ is not in general sufficient to determine which elements descend to the base field $\Q$.

The Brauer--Manin obstruction is not the only obstruction to the Hasse principle, as first shown by Skorobogatov in his famous paper $\cite{skorobogatov1999manin}$. However, it is conjectured to be the only obstruction for some important families of varieties, such as curves (\cite{Skorobogatov_2001}), geometrically rationally connected varieties (\cite{CTconjecture2003}), and K3 surfaces (\cite{skorobogatov_2009}).  

The connection between certain K3 surfaces and abelian surfaces or elliptic curves has been exploited to study the Brauer group and the corresponding obstruction; see the results on the transcendental Brauer groups of Kummer surfaces in $\cite{skorobogatov2010brauer}$ and $\cite{skorobogatovzarhin2017}$ or the results on diagonal quartic surfaces in $\cite{diagonalquartic2011}$ and \cite{diagonalquartic2015}, for example. 

This paper extends work of Van Luijk on the Brauer group of a family of generalised Kummer surfaces $Y_C$, defined as the minimal desingularisation of the quotient of $C \times C$ by an automorphism of order 3, where $C$ is a diagonal cubic curve defined by $C: ax^3+by^3+cz^3=0$, over a number field $k$. Indeed, \cite[Prop~4.2]{vanluijk2007cubic} shows that the algebraic Brauer groups of these surfaces contain only constant elements, except potentially when $abc$ is a cube in $k$.

Our first main theorem investigates the relationship between the transcendental Brauer groups of $Y_C$ and $C \times C$, obtaining a result analogous to Theorem 2.4 in \cite{skorobogatov2010brauer}. 
\begin{theorem}[Theorem \ref{main_thm}] \label{theorem1.1}
  Let $C$ be a smooth projective cubic curve defined over a number field. There is an embedding 
    \[ \frac{\br(Y_C)_n}{\br_1(Y_C)_n} \hookrightarrow \frac{\br(C \times C)_n}{\br_1(C \times C)_n},  \]
    which is an isomorphism if $n$ is coprime to 3. The subgroups of elements of order coprime to 3 of the transcendental Brauer groups $\br(Y)/\br_1(Y)$ and $\br(C \times C)/\br_1(C \times C)$ are isomorphic.   
\end{theorem}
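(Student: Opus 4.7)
\medskip
\noindent\textbf{Proof proposal.} The plan is to adapt the strategy of \cite[Thm.~2.4]{skorobogatov2010brauer} for Kummer surfaces of abelian surfaces, replacing the $-1$ involution there with the order-$3$ symplectic automorphism $\sigma$ of $X := C \times C$.

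First I would fix the birational geometry relating $X$ to $Y_C$. The automorphism $\sigma$ has $9$ isolated fixed points on $X$; blowing these up (and, if needed, resolving further $\sigma$-fixed loci on the exceptional divisors) produces a smooth surface $\tilde{X}$ whose quotient $\tilde{X}/\sigma$ is isomorphic to $Y_C$, giving a finite degree-$3$ morphism $\pi: \tilde{X} \to Y_C$. Since $\tilde{X} \to X$ is a sequence of blowups at smooth closed points, pullback induces a Galois-equivariant isomorphism $\br(X) \xrightarrow{\sim} \br(\tilde{X})$ that respects the algebraic filtration, so I may work with $\tilde{X}$ in place of $X$ throughout.

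The core of the argument is the standard pullback--trace computation for $\pi$. There are natural maps $\pi^*: \br(Y_C) \to \br(\tilde{X})$ and $\pi_*: \br(\tilde{X}) \to \br(Y_C)$ satisfying $\pi_* \pi^* = 3$ and $\pi^* \pi_* = 1 + \sigma^* + \sigma^{*2}$. Hence the image of $\pi^*$ lies in $\br(\tilde{X})^\sigma = \br(X)^\sigma$, and on $n$-torsion with $\gcd(n,3) = 1$ the map $\pi^*$ is an isomorphism onto $\br(X)^\sigma[n]$, with explicit inverse $\tfrac{1}{3}\pi_*$. Galois-equivariance and the compatibility of pullback with base change to $\bar{k}$ force $\pi^*(\br_1(Y_C)) \subseteq \br_1(X)$, so $\pi^*$ descends to the transcendental quotient. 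Injectivity on $\br(Y_C)_n/\br_1(Y_C)_n$ for arbitrary $n$ then follows because $\ker \pi^*$ is annihilated by $3$ and, by a geometric computation over $\bar{k}$, is contained in $\br_1(Y_C)_n$.

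For the isomorphism in the coprime-to-$3$ case, the additional input required is the triviality of the $\sigma$-action on $\br(\bar{X})/\br_1(\bar{X})$, which guarantees that every prime-to-$3$ transcendental class on $X$ is $\sigma$-invariant modulo algebraic contributions and hence lies in the image of $\pi^*$. This should follow from $\sigma$ being symplectic: $\sigma^*$ fixes $H^{2,0}(\bar{X})$, and the induced action on the transcendental lattice $T(\bar{X}) \subset H^2_{\et}(\bar{X}, \Z_\ell(1))$ is consequently trivial. The main obstacle I anticipate is precisely the joint handling of this $\sigma$-triviality and the kernel-control step above, while tracking their interaction with the algebraic Brauer filtration through the Hochschild--Serre spectral sequences of $Y_C$ and $X$; this bookkeeping is the most delicate part of the argument, especially over a number field where $\Gamma_k$ acts non-trivially on both $\Pic(\bar{Y}_C)$ and $\Pic(\bar{X})$.
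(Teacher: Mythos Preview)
Your overall architecture matches the paper's: build an equivariant model over $Y_C$, use the $\pi^*/\pi_*$ calculus with $\pi_*\pi^*=3$ and $\pi^*\pi_*=1+\sigma^*+\sigma^{*2}$, and feed in the triviality of the $\sigma$-action on $\br(\bar X)$ to get the prime-to-$3$ isomorphism. Your Hodge-theoretic route to that triviality ($\sigma$ symplectic $\Rightarrow$ $\sigma$ trivial on $T(\bar X)$ $\Rightarrow$ trivial on $\br(\bar X)$) is a genuine and cleaner alternative to what the paper does. The paper instead runs the Hochschild--Serre spectral sequence for the \'etale $\Z/3$-cover $\bar S_0\to\bar Y_0$, proves $\h^1(\sigma,\Pic^0(\bar X))=0$ and, by an explicit lattice computation, $\h^1(\sigma,\NS(\bar X))=0$; this yields $\br(\bar Y_0)\hookrightarrow\br(\bar S_0)^\sigma$, and the rank count $b_2-\rho=2$ on both sides upgrades this to an isomorphism, from which $\sigma$-triviality on $\br(\bar X)$ is read off as a corollary.

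The gap in your proposal is the embedding for \emph{all} $n$. You claim that $\ker\pi^*\subset\br_1(Y_C)_n$ ``by a geometric computation over $\bar k$'', but this is exactly the statement that $\pi^*:\br(\bar Y_C)\to\br(\bar X)$ is injective on the $3$-primary part, and neither the relation $\pi_*\pi^*=3$ nor your symplectic argument delivers it: on $(\Q_3/\Z_3)^2$ one can have $f,g$ with $fg=gf=3$ and $f$ non-injective, so the formal identities do not suffice. The paper's $\h^1(\sigma,\NS(\bar X))=0$ is precisely the missing input---it is what makes the Hochschild--Serre differential vanish and forces $\br(\bar Y_0)\hookrightarrow\br(\bar S_0)$, hence $\br(\bar Y_C)\xrightarrow{\sim}\br(\bar X)$. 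Your symplectic argument controls the $\sigma$-action on $T(\bar X)$ but says nothing about $\h^1(\sigma,\NS(\bar X))$, so you still need that computation (or an equivalent one) to close the argument at $3$.

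A smaller technical point: blowing up the nine \emph{reduced} fixed points does not directly give a $\tilde X$ with $\tilde X/\sigma\cong Y_C$ and $\pi$ finite flat of degree $3$. The paper blows up along the non-reduced scheme $\pi^{-1}(\text{Sing}(X/\sigma))$ and verifies via Luna's \'etale slice theorem that the exceptional loci on both sides match; your parenthetical ``if needed, resolving further'' is hiding this, and it is worth making precise since the finite-flatness of $\pi$ is what underlies the existence of $\pi_*$ on Brauer groups.
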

This result allows us to explicitly compute the Brauer group of $Y_C$ over $\Q$, using the description of the transcendental Brauer group of products of CM elliptic curves given by Newton in \cite{rachel2015}.     
\begin{theorem}[Corollary \ref{main corollary}, Theorem \ref{NoObstructionThm}] \label{theorem1.2}
    Let $C/\Q$ be a diagonal cubic curve $ax^3+by^3+cz^3=0$  such that $abc$ is not a cube in $\Q$. Then,
   \[\frac{\br(Y_C)}{\br_0(Y_C)}= \begin{cases}
    \Z/2\Z &\text{ if } 4abc \text{ is a  cube in } \Q; \\
   0 &\text{ otherwise.}
\end{cases} \]
Moreover, there is no Brauer--Manin obstruction to the Hasse principle for $Y_C$.
\end{theorem}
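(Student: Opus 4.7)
The plan combines Theorem \ref{theorem1.1} with the algebraic vanishing of \cite{vanluijk2007cubic} and Newton's explicit description of the transcendental Brauer group of products of CM elliptic curves \cite{rachel2015}.

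Since $abc$ is not a cube in $\Q$, \cite[Prop.~4.2]{vanluijk2007cubic} gives $\br_1(Y_C)/\br_0(Y_C)=0$, so $\br(Y_C)/\br_0(Y_C)=\br(Y_C)/\br_1(Y_C)$. By Theorem \ref{theorem1.1} this transcendental quotient embeds into $\br(C\times C)/\br_1(C\times C)$, and the embedding is an isomorphism on the parts of order coprime to $3$. The Jacobian $E=\Jac(C)$ is a CM elliptic curve with $j$-invariant $0$, CM by $\Z[\omega]$, and Weierstrass model $y^2=x^3-432(abc)^2$. Since $C$ is a torsor under $E$, the surfaces $C\times C$ and $E\times E$ coincide over $\bar{\Q}$, and $\br(\bar{C}\times\bar{C})$ is obtained from $\br(\bar{E}\times\bar{E})$ by twisting the Galois action accordingly. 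Feeding this twisted Galois module into Newton's classification produces $\br(C\times C)/\br_1(C\times C)$: the only contributions surviving descent to $\Q$ are $2$-primary, and they correspond to the existence of a $\Q$-rational $2$-torsion point on $E$. A direct check on the Weierstrass equation shows that $E(\Q)[2]\neq 0$ if and only if $432(abc)^2$, equivalently $4abc$, is a cube in $\Q$; this accounts for the $\Z/2\Z$ in the stated formula on the prime-to-$3$ part.

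Because Theorem \ref{theorem1.1} is only an injection on the $3$-primary part, one needs to show separately that $\br(C\times C)[3^\infty]/\br_1(C\times C)[3^\infty]$ is already trivial under the hypothesis $abc\notin(\Q^\times)^3$. The plan is to exploit that this hypothesis forces $\Q(E[3])/\Q$ to be large enough that no class in $\br(\bar{E}\times\bar{E})[3^\infty]$ can be Galois-invariant after the relevant torsor twist, using the explicit $\Z[\omega]$-module structure of $E[3^\infty]$. Combined with the previous step this yields the first assertion of Theorem \ref{theorem1.2}.

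For the Brauer--Manin statement only the case $4abc\in(\Q^\times)^3$ is nontrivial, with $\br(Y_C)/\br_0(Y_C)=\langle\alpha\rangle\cong\Z/2\Z$. I would trace $\alpha$ back to an explicit transcendental class on $C\times C$ via the pullback of Theorem \ref{theorem1.1}, and then to $E\times E$ through the torsor twist, where Newton's machinery identifies its image with a concrete $2$-torsion class built from a cup product on $E[2]$. Standard smooth-and-proper arguments then imply that $\alpha$ is unramified at primes of good reduction, so the local invariant automatically vanishes there. At each bad prime, the $2$-torsion nature of $\alpha$, its compatibility with the order-$3$ automorphism $\sigma$ used to construct $Y_C$, and its behaviour under the swap of the two factors of $C\times C$ constrain the local-invariant function strongly enough to exhibit an adelic point $(P_v)\in Y_C(\mathbb{A}_\Q)$ with $\sum_v\mathrm{inv}_v\,\alpha(P_v)=0$.

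The main obstacle is this last local-invariant computation: tracking a transcendental class through the singular quotient $(C\times C)/\langle\sigma\rangle$, its minimal desingularisation to $Y_C$, and the torsor twist back to $E\times E$, and then controlling the invariant map at the finitely many primes of bad reduction, requires careful bookkeeping. The vanishing of the $3$-primary part is comparatively mild but also depends on a clean analysis of $\Q(E[3])$ for the specific CM curve $y^2=x^3-432(abc)^2$ under the assumption $abc\notin(\Q^\times)^3$.
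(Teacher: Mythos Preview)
Your overall architecture matches the paper: Van Luijk's vanishing of $\br_1/\br_0$, Theorem \ref{theorem1.1} to pass to $C\times C$, and Newton's results for CM products. On the prime-to-$3$ part the paper is a bit cleaner than your ``twisted Galois module'' sketch: it invokes a general fact (\cite[3.3]{uniformK3}) that for an $N$-covering $C\times C\to E\times E$ the transcendental Brauer groups agree on $n$-torsion for $(n,N)=1$, reducing immediately to Newton's computation for $E\times E$ over $\Q$. Your criterion via $E(\Q)[2]\neq 0$ is equivalent to the paper's ``$D$ is a cube'', so that part is fine.

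There is, however, a genuine gap in your $3$-primary step. You propose to argue via the size of $\Q(E[3])/\Q$, but Newton's machinery in \cite{rachel2015} is formulated for $E\times E$, not for torsors, and the $N$-covering comparison above gives nothing when $n=N=3$. The paper resolves this differently: it base-changes to a carefully chosen cubic field $L=\Q(\sqrt[3]{\lambda})$ in which $C$ acquires a point, so that $C_L\cong E_L$ and Newton's Hecke-character criterion applies over $L$. The key technical input is an elementary lemma ensuring one can pick $\lambda\in\{a/b,b/c,c/a\}$ with $KL\neq K(\sqrt[3]{4D})$, which then produces a prime $\mathfrak q$ of $KL$ with $\psi_{E/KL}(\mathfrak q)\notin \Z+3\mathcal O_K$, forcing $m(3)=0$. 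Your sketch does not supply any mechanism to get from the torsor $C\times C$ to a situation where the $3$-primary transcendental group can actually be computed, and the bare assertion about $\Q(E[3])$ does not obviously do this.

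For the Brauer--Manin part your route also diverges from the paper and is substantially harder as stated. The paper does not analyse good and bad primes separately nor rely on symmetry constraints; instead it writes down the explicit Azumaya algebra $\beta=(x-3,u-3)$ on $E\times E$ (with $E:y^2=x^3-27$), observes that $C(\Q_2)\neq\emptyset$ because $4abc$ is a cube and every $2$-adic unit is a cube, and then checks that the evaluation map $\mathrm{ev}_\beta:Y(\Q_2)\to\Z/2$ is \emph{surjective} by evaluating at the origin and at the $2$-torsion point $(P,P)$. Surjectivity at a single place already forces $Y(\mathbb A_\Q)^{\br}\neq\emptyset$. Your plan to control invariants prime-by-prime would work in principle, but it is considerably more laborious and you have not indicated how the ``symmetry constraints'' actually pin down the local invariants at the bad primes.
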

Theorem \ref{theorem1.2} builds upon \cite[Prop~4.2]{vanluijk2007cubic}, wherein Van Luijk showed that $\br_1(Y_C)=\br_0(Y_C)$.
Skorobogatov has conjectured that the Brauer--Manin obstruction is the only obstruction to the Hasse principle for K3 surfaces. Assuming this conjecture and everywhere local solubility of $C/\Q$, Theorem \ref{theorem1.2} proves the existence of \emph{Galois cubic points} on $C$, i.e.\ $L$-rational points on $C$ for a Galois extension $L/\Q$ of degree 3. 
\begin{theorem}[Theorem \ref{galoiscubicpoints}] \label{theorem1.3}
  Let $C/\Q$ be an everywhere locally soluble smooth curve with equation $ax^3+by^3+cz^3=0$. Then, Skorobogatov's conjecture implies the existence of Galois cubic points on $C$.  
\end{theorem}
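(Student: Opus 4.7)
The plan is to use the K3 surface $Y_C$ as a bridge to Galois cubic points on $C$. A $\Q$-rational point of $Y_C$ lying off the exceptional locus of the resolution $Y_C \to (C\times C)/\sigma$ is the image of a $\Gal(\bar{\Q}/\Q)$-stable $\sigma$-orbit in $(C\times C)(\bar{\Q})$; generically this orbit has size $3$, so the stabiliser of any one of its points is an index-$3$ subgroup of $\Gal(\bar{\Q}/\Q)$, cutting out a cyclic cubic extension $L/\Q$, and projecting to the first factor yields an $L$-rational point of $C$.

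To set this up, I would first verify that $Y_C$ is everywhere locally soluble: for each place $v$ of $\Q$, $C(\Q_v) \neq \emptyset$ by hypothesis, so $(C \times C)(\Q_v)$ is non-empty, and away from the finite $\sigma$-fixed locus these give smooth $\Q_v$-points of the quotient which lift to $Y_C(\Q_v)$. Then, in the main case that $abc$ is not a cube, Theorem~\ref{theorem1.2} supplies the absence of any Brauer--Manin obstruction, so $Y_C(\A_\Q)^{\br}\neq\emptyset$, and Skorobogatov's conjecture applied to the K3 surface $Y_C$ yields $Y_C(\Q)\neq\emptyset$.

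Next, I would dichotomise on whether a chosen rational point $y \in Y_C(\Q)$ lies on the exceptional locus. If it does not, the orbit analysis above applies: a size-$3$ orbit yields a Galois cubic point on $C$ directly, while a size-$1$ orbit is a $\Q$-rational $\sigma$-fixed point $(P,Q)\in(C\times C)(\Q)$, so $C$ has a $\Q$-rational point and Galois cubic points exist trivially (any element of $C(\Q)$ lies in $C(L)$ for every Galois cubic $L/\Q$). If $y$ does lie on the exceptional locus, its image in $(C\times C)/\sigma$ is a $\Q$-rational singular closed point, which in turn is the image of a $\Q$-rational $\sigma$-fixed point of $C\times C$, again forcing $C(\Q)\neq\emptyset$.

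The main obstacle is the remaining case in which $abc$ is a cube, since Theorem~\ref{theorem1.2} does not cover it: Van Luijk's result allows non-constant algebraic Brauer classes on $Y_C$, and the Brauer--Manin analysis must be redone. Here I would either extend the Brauer-group computation to the $abc$-cube case and then repeat the above argument, or attempt a direct construction using the fact that such $C$ is a twist of the Fermat cubic $x^3+y^3+z^3=0$, exploiting the CM structure by $\Z[\zeta_3]$ on its Jacobian together with the $\Q$-rational divisor class of degree $3$ coming from the hyperplane section $C\hookrightarrow\Proj^2$ to produce an irreducible $\Q$-rational effective degree-$3$ divisor with Galois cubic residue field.
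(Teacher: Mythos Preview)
For $abc$ not a cube, your argument is correct and follows the paper's route: transfer local solubility to $Y_C$, apply Theorem~\ref{theorem1.2} to kill the Brauer--Manin obstruction, invoke Skorobogatov's conjecture, and extract Galois cubic points from $Y_C(\Q)$ via the $\rho$-orbit description. (Two cosmetic points: a Galois-stable size-$3$ orbit may carry the \emph{trivial} Galois action, giving $P,Q,R\in C(\Q)$ rather than a genuine cubic field; and $\rho$-fixed points are $(P,P)$ with $P$ an inflexion point, not arbitrary $(P,Q)$. Neither affects the conclusion.)

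The gap is the $abc$-cube case, and your proposed fixes are far harder than what is needed. The paper does not analyse $\br(Y_C)$ here at all: it simply cites the classical fact (see \cite[Remark~4.3]{vanluijk2007cubic}) that when $abc$ or $4abc$ is a rational cube, an everywhere locally soluble diagonal cubic $ax^3+by^3+cz^3=0$ already has a $\Q$-point, by an elementary infinite-descent argument. This bypasses the K3 surface entirely. In fact the paper's own proof of Theorem~\ref{galoiscubicpoints} uses this descent for both the $abc$-cube and the $4abc$-cube cases, invoking the Brauer-group computation (namely $\br(Y_C)=\br_0(Y_C)$ from Corollary~\ref{main corollary}) only when neither is a cube; so the evaluation-map analysis of the $2$-torsion class in Theorem~\ref{NoObstructionThm} is not actually needed for this theorem, though your use of the full Theorem~\ref{theorem1.2} for the non-cube case is equally valid.
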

The proof of Theorem \ref{theorem1.3} gives a strengthening of \cite[Thm~1.1]{vanluijk2007cubic} in the case $k = \Q$; it shows that one can replace the algebraic Brauer group in the statement of \cite[Thm~1.1]{vanluijk2007cubic} with the full Brauer group of $Y_C$.  
Thus, it shows that Skorobogatov's conjecture implies a negative answer to \cite[Question 2]{vanluijk2007cubic} over $\Q$.
The author would be very interested in unconditionally proving the existence of Galois cubic points on $C$ as in Theorem \ref{theorem1.3}, or relating this problem to the Brauer--Manin obstruction for curves instead. 
\subsection*{Outline of the paper} After recalling some technicalities in Section 2, Section 3 is devoted to the proof of Theorem \ref{theorem1.1}. In Section 4 we prove Theorem \ref{theorem1.2}. Section 5 briefly discusses the Hasse principle for the surface $Y_C$ and the implications for Galois cubic points on $C$.      
\subsection*{Notation} We fix some notation. 
\begin{itemize} 
    \item $k$ is a number field and $\Gamma_k$ is the absolute Galois group $\Gal(\bar{k}/k)$;
    \item for a $k$-variety $X$, we denote by $\bar{X}$ the base change to an algebraic closure $X \times_k \bar{k}$; for $x \in \br(X)$, we denote by $\bar{x}$ the image of $x$ via the natural map $\br(X) \to \br(\bar{X})$;
    \item for an abelian group $G$ and a positive integer $n$, we denote by $G_n$ the subgroup of elements of order dividing $n$; for a $G$-module $M$, we denote by $M^G$ the submodule of $G$-invariants;
    \item if $f:A \to B$ is an isogeny of abelian varieties, we denote by $f^{\vee}: B^{\vee} \to A^{\vee}$ the dual isogeny.
\end{itemize}

\section{Preliminaries} 

In this section we gather the most important technical tools (well known in the literature) that are required for the proofs in Section 3. We begin with the statement, and sketch of the proof, of a very general lemma. 

\begin{lemma}{\label{general_lemma}}
    Let $X$ be a smooth and geometrically integral variety over $k$, let $U$ be an open subset whose complement in $X$ has codimension $\geq 2$. Then the following hold:
    \[ k[U]=k[X], \qquad \Pic(U)=\Pic(X), \qquad \br(U)=\br(X). \]
\end{lemma}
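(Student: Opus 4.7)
The plan is to treat each of the three equalities separately, exploiting smoothness in a different way for each.

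For the first, \(k[U]=k[X]\), I would invoke algebraic Hartogs. Since $X$ is smooth, it is in particular normal, so the local rings at codimension 1 points of $X$ are discrete valuation rings; combining this with the fact that $X\setminus U$ contains no codimension 1 point gives
\[
k[X]=\bigcap_{x\in X^{(1)}} \oo_{X,x}=\bigcap_{x\in U^{(1)}} \oo_{U,x}=k[U],
\]
the outer equalities being the standard characterization of global sections of a normal scheme as the intersection of its DVRs at height 1 primes.

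For the second, $\Pic(U)=\Pic(X)$, I would use the restriction sequence
\[
\bigoplus_{D} \Z \cdot [D] \longrightarrow \Pic(X) \longrightarrow \Pic(U) \longrightarrow 0,
\]
where $D$ ranges over the prime Weil divisors of $X$ contained in $X\setminus U$. Smoothness ensures that Weil and Cartier divisors coincide (so this sequence is exact and $\Pic(X)$ is the divisor class group), and by hypothesis the left hand term is zero since $X\setminus U$ contains no codimension 1 point. Hence the restriction map is an isomorphism.

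For the third, $\br(U)=\br(X)$, I would invoke purity for the Brauer group of a smooth variety. Concretely, for smooth $X$ over a field, $\br(X)$ injects into $\br(k(X))$, and the image consists of those Brauer classes that are unramified at every codimension 1 point of $X$ (residue map to $\h^1(\kappa(x),\Q/\Z)$ vanishes). The same is true for $U$. Since $k(X)=k(U)$ and the codimension 1 points of $X$ and of $U$ coincide (the ones we would lose lie in $X\setminus U$, which has codimension $\geq 2$), the two unramified subgroups of $\br(k(X))$ agree, giving $\br(X)=\br(U)$.

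The main obstacle is the third item, since it genuinely requires the purity theorem for the Brauer group of a smooth scheme, rather than being a formal manipulation; the first two follow directly from the formal properties of normal and smooth schemes respectively. One could alternatively deduce it from the Gysin/localization sequence in étale cohomology combined with the cohomological purity isomorphism $\h^i_Z(X,\G_m)\cong \h^{i-2c}(Z,\G_m(-c))$ in the relevant range, but invoking the low-degree case of absolute purity as stated above is the cleanest route for a sketch.
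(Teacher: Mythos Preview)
Your proof is correct and follows essentially the same route as the paper's sketch: Hartogs for regular functions, the excision sequence for divisor class groups, and Grothendieck's purity theorem for the Brauer group. In fact your treatment of $k[U]=k[X]$ via normality is more precise than the paper's appeal to density of $U$ (which by itself only gives injectivity), and your unramified-Brauer formulation is just an unpacking of what the paper calls ``Grothendieck's purity theorem.''
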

\begin{proof}
    The first equality comes from the fact that $U$ is dense in $X$ because $X$ is irreducible. Since codimension of $X \setminus U$ is at least two, the restriction gives a correspondence between (Weil-)divisors of $X$ and $U$. Finally, the result about the Brauer groups derives directly from Grothendieck's purity theorem. 
\end{proof}

Let $\rho_X$ be the rank of the Néron--Severi group $\NS(\Bar{X})$ and let $b_2$ be the second Betti number for a smooth, projective, geometrically integral variety $X$. Then for every prime number $l$, we have the following exact sequence (\cite[Prop.~5.2.9]{colliot2021brauer})
\[0 \to (\Q_l/\Z_l)^{b_2-\rho_X} \to \br(\Bar{X})\{ l\} \to \h^3_{\et}(\Bar{X},\Z_l(1))\to 0\]
Moreover, since $k \subset \C$, we have that $\bigoplus_{l} \h^3_{\et}(\Bar{X},\Z_l(1))$ is isomorphic to the torsion subgroup of $\h^3(X(\C),\Z)$. If $X$ is a surface such that $\NS(\Bar{X})$ has no $l$-torsion, then $\h^1(X(\C),\Z)$ has no $l$-torsion, thus by Poincaré duality $\br(\Bar{X})\{l\} \cong (\Q_l/\Z_l)^{b_2-\rho_X}$. 
Summing up, since the Brauer group is torsion, it follows that if $X$ is an abelian variety or a K3 surface we have $\br(\Bar{X}) \cong (\Q/\Z)^{b_2-\rho_X}$.

Finally, a crucial proposition relating the Brauer groups for a Galois covering of smooth varieties. 
\begin{proposition}\textnormal{\cite[3.8.5]{colliot2021brauer} }{\label{galois_covering}}
Let $X$ and $Y$ be smooth varieties and let $f : Y \to X$ be a finite flat morphism of degree $d$ such that $k(Y)$ is a Galois extension of $k(X)$ with Galois group $G$. Then, for any $n$ coprime to $d=\lvert G \rvert$, the natural map $f^*: \br(X)_n \to \br(Y)^G_n$ is an isomorphism. 
\end{proposition}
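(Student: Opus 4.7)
The plan is to exploit the trace (or norm) map on Brauer groups associated to the finite flat morphism $f$. For any such morphism between smooth varieties of degree $d$, there is a map $f_* : \br(Y) \to \br(X)$ obtained by sheafifying the norm map $f_*\mathbb{G}_{m,Y} \to \mathbb{G}_{m,X}$ on units; the key properties are the projection formula
\[
f_* \circ f^* = d \cdot \mathrm{id}_{\br(X)}
\]
and, in the Galois case with group $G$ of order $d$, the complementary identity
\[
f^* \circ f_* = \sum_{\sigma \in G} \sigma^* : \br(Y) \to \br(Y),
\]
which restricts to multiplication by $d$ on the invariants $\br(Y)^G$. I would take both identities as the structural input of the argument.

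Granted these identities, the rest is a short formal computation using the hypothesis $\gcd(n,d) = 1$. Pick $d' \in \Z$ with $dd' \equiv 1 \pmod n$, so multiplication by $d$ is an automorphism of any $n$-torsion abelian group. For injectivity of $f^*$ on $\br(X)_n$: if $\alpha \in \br(X)_n$ satisfies $f^*\alpha = 0$, then $d\alpha = f_*f^*\alpha = 0$ and therefore $\alpha = 0$. For surjectivity onto $\br(Y)^G_n$: given $\beta \in \br(Y)^G_n$, set $\alpha := d' f_*(\beta)$; then $n\alpha = d' f_*(n\beta) = 0$, so $\alpha \in \br(X)_n$, and
\[
f^*(\alpha) \;=\; d' \cdot f^* f_* (\beta) \;=\; d' d \cdot \beta \;=\; \beta.
\]

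The main obstacle is thus not this formal deduction but the construction of $f_*$ together with the verification of the two compatibility identities. When $f$ is \emph{étale}, both identities fall out of the Hochschild--Serre spectral sequence $H^p(G, H^q_{\et}(Y,\mathbb{G}_m)) \Rightarrow H^{p+q}_{\et}(X,\mathbb{G}_m)$: the higher $G$-cohomology groups are annihilated by $d$, and hence, on $n$-torsion with $\gcd(n,d) = 1$, the spectral sequence collapses to the desired isomorphism $\br(X)_n \xrightarrow{\sim} \br(Y)^G_n$. In the merely finite flat situation allowed by the statement, one cannot invoke Hochschild--Serre directly, but the norm on $\mathbb{G}_m$ still yields a trace on $\br$ with the same formal properties; this is the content carried out in \cite[3.8.5]{colliot2021brauer}, which we are citing.
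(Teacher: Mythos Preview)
The paper does not give its own proof of this proposition: it is quoted with a citation to \cite[3.8.5]{colliot2021brauer} and used as a black box. Your sketch via the trace/norm map $f_*$ on Brauer groups, together with the identities $f_*f^*=d\cdot\id$ and $f^*f_*=\sum_{\sigma\in G}\sigma^*$, is correct and is exactly the standard argument behind the cited result, so there is nothing to compare.
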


\subsection{Néron--Severi groups of product abelian surfaces}

In this section we present some facts about the Néron--Severi groups of product abelian varieties; in particular  we are interested in the case of product abelian surfaces. We recommend \cite[pp.~189-190]{mumford_av} and the appendix in \cite{kani2016} for more details on this matter. 

Let $A$ be an abelian variety over an algebraically closed field and let $\NS(A) = \Pic(A)/\Pic^0(A)$ denote the Néron--Severi group of $A$, whose elements are classes of line bundles up to rational equivalence. For any line bundle $\mathcal{L}$, denote with $\lambda_{\mathcal{L}}$ the polarization given by $\mathcal{L}$, i.e.\ the map  
\begin{align*}
    \lambda_{\mathcal{L}}: A & \to \Pic(A) \\ a & \mapsto t^*_a \mathcal{L} \otimes \mathcal{L}^{-1}.  \end{align*} 
    We assume that $A$ is principally polarized with polarization $\lambda : A \xrightarrow{\sim} A^{\vee}$.  Using the Rosati involution $r_{\lambda}$ on $\End(A)$ defined by $r_{\lambda} (\alpha)= \lambda^{-1} \alpha^{\vee} \lambda$, the 
 group $\NS(A)$ can be described as a subgroup of $\End(A)$ in the following way.
\begin{proposition} \textnormal{(\cite{mumford_av})}
    The map $D \mapsto \lambda^{-1} \lambda_D$ defines a functorial isomorphism  \[ \Phi_{\lambda}: \NS(A) \xrightarrow{ \sim} \End_{\lambda}(A) :=\{ \alpha \in \End(A): r_{\lambda}(\alpha)=\alpha \}. \]
\end{proposition}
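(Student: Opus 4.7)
The plan is to verify in turn that $\Phi_\lambda$ is a well-defined group homomorphism into $\End(A)$, that its image lies in $\End_\lambda(A)$, that it is injective, and that it is surjective. Functoriality in the principally polarized variety $(A,\lambda)$ then follows from naturality of all constructions used.

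First I would show that the assignment $\mathcal{L} \mapsto \lambda_{\mathcal{L}}$ descends to $\NS(A)$: any $\mathcal{M} \in \Pic^0(A)$ is, by definition, translation-invariant, i.e.\ $t_a^*\mathcal{M} \cong \mathcal{M}$ for all $a \in A$, so $\lambda_{\mathcal{M}} = 0$. Additivity of $D \mapsto \lambda_D$ is a direct consequence of the theorem of the square, so $\Phi_\lambda$ is a well-defined group homomorphism. To see that $\Phi_\lambda(D) \in \End_\lambda(A)$, I would invoke two classical symmetries: $\lambda_{\mathcal{L}}^\vee = \lambda_{\mathcal{L}}$ under the canonical double-duality isomorphism $A \xrightarrow{\sim} A^{\vee\vee}$, and $\lambda^\vee = \lambda$ for the principal polarization. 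Substituting $\alpha = \lambda^{-1}\lambda_D$ into $r_\lambda(\alpha) = \lambda^{-1}\alpha^\vee\lambda$ and using these identities gives $r_\lambda(\Phi_\lambda(D)) = \Phi_\lambda(D)$.

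For injectivity, $\Phi_\lambda(D) = 0$ forces $\lambda_D = 0$, and a standard theorem identifies $\ker(\mathcal{L} \mapsto \lambda_{\mathcal{L}})$ with $\Pic^0(A)$, whence $D$ is trivial in $\NS(A)$. Surjectivity is the main obstacle: given a Rosati-fixed $\alpha$, one needs to produce a line bundle class $D$ with $\lambda_D = \lambda\alpha$. A natural candidate is the pullback of the Poincaré bundle $\mathcal{P}$ on $A \times A^\vee$ along the morphism $(\id_A, \lambda\alpha) : A \to A \times A^\vee$; a direct computation using the fact that $\lambda_{\mathcal{P}}$ is the canonical ``swap'' map shows the resulting line bundle has polarization $\lambda\alpha + \alpha^\vee\lambda = 2\lambda\alpha$, yielding surjectivity of $\Phi_\lambda$ after inverting $2$. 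The refinement to integral surjectivity is the genuinely delicate step, and I would extract it from Mumford's analysis of the Rosati involution acting on $\ell$-adic Tate modules, where symmetric endomorphisms are matched bijectively with the $\ell$-adic incarnation of $\NS(A)$.
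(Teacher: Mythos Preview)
The paper does not give its own proof of this proposition; it simply records it as a citation to Mumford. Your sketch is correct and follows the standard route one finds in Mumford's treatment: well-definedness and injectivity via $\ker(\mathcal{L}\mapsto\lambda_{\mathcal{L}})=\Pic^0(A)$, Rosati-fixedness via the symmetries $\lambda_{\mathcal{L}}^\vee=\lambda_{\mathcal{L}}$ and $\lambda^\vee=\lambda$, and surjectivity first modulo $2$ via the Poincar\'e bundle pullback $(\id_A,\lambda\alpha)^*\mathcal{P}$, then integrally via the $\ell$-adic comparison. Since the only step you do not actually carry out is the integral surjectivity, and you explicitly defer that to Mumford, you end up leaning on the same reference the paper does---just with substantially more of the argument made explicit. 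One small remark: the integral surjectivity can also be phrased as the statement that every \emph{symmetric} homomorphism $A\to A^\vee$ arises as $\lambda_{\mathcal{L}}$ for some line bundle $\mathcal{L}$, which is how Mumford packages it; your Tate-module formulation is equivalent.
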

It is possible to be more explicit in the case of a product of two elliptic curves. Let $A=E_1 \times E_2$ and let $p_1: A \to E_1$, $p_2:A \to E_2$ be the projections and let $e_1: E_1 \to A$, $e_2:E_2 \to A$ be the natural embeddings. Then, $p:=p_1^{\vee}+p_2^{\vee} : E_1^{\vee} \times E_2^{\vee} \to A^{\vee}$ is an isomorphism and given two principal polarizations $\lambda_i$ of $E_i$ we can define a principal polarization on $A$ as $\lambda_1 \times \lambda_2 := p \circ (\lambda_1 \times \lambda_2) : A \xrightarrow{\sim} A^{\vee}$, called the product polarization. Recall that if $\alpha \in \End(E_1 \times E_2)$ we can describe $\alpha$ as a $2 \times 2$ matrix $(\alpha_{ij})$ by putting $\alpha_{ij}=p_i \alpha e_j \in \Hom(E_j,E_i)$, i.e.\ 
\[\End(E_1 \times E_2) = \Bigl\{ \begin{pmatrix}
    \alpha_{11} & \alpha_{12} \\ \alpha_{21} & \alpha_{22} 
\end{pmatrix} : \alpha_{ij} \in \Hom(E_j,E_i) \Bigr\}. \]
\begin{proposition} \textnormal{\cite[Prop.\ 60]{kani2016}} {\label{Kani's prop}}
    We have that \[\End_{\lambda_1 \times \lambda_2}(E_1 \times E_2) = \Bigl\{ \begin{pmatrix}
    \alpha_{11} & \alpha'_{21} \\ \alpha_{21} & \alpha_{22} 
\end{pmatrix} : \alpha_{ii} \in \End_{\lambda_i}(E_i), \alpha_{21} \in \Hom(E_1,E_2) \Bigr\}, \]
where $\alpha'_{21}=r_{\lambda_1, \lambda_2}(\alpha_{21})=\lambda_1^{-1} \alpha_{21}^{\vee}\lambda_2$. Thus, the map $(\alpha_1, \alpha_2, \beta) \mapsto$ $\begin{psmallmatrix}
    \alpha_1 & \beta' \\ \beta & \alpha_2 \end{psmallmatrix}$   defines an isomorphism: 
   \[ \phi : \End_{\lambda_1}(E_1) \oplus \End_{\lambda_2}(E_2) \oplus \Hom(E_1,E_2) \xrightarrow{\sim} \End_{\lambda_1 \times \lambda_2}(A), \] 
    equivalently written as \[ \phi : \NS(E_1) \oplus \NS(E_2) \oplus \Hom(E_1,E_2) \xrightarrow{\sim} \NS(A).\]
\end{proposition}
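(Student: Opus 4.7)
The plan is to compute the Rosati involution $r_{\lambda_1 \times \lambda_2}$ explicitly in the matrix description of $\End(E_1 \times E_2)$ and then read off its fixed subgroup. Once this is done, both the characterization of $\End_{\lambda_1 \times \lambda_2}(E_1 \times E_2)$ and the isomorphism $\phi$ will follow immediately.

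First, I would set up the matrix calculus of duals. Under the isomorphism $p=p_1^{\vee}+p_2^{\vee}:E_1^{\vee}\times E_2^{\vee}\xrightarrow{\sim} A^{\vee}$, the dual of $\alpha=(\alpha_{ij})\in \End(E_1\times E_2)$ is the transposed matrix of duals, namely $(\alpha^{\vee})_{ij}=\alpha_{ji}^{\vee}\in \Hom(E_i^{\vee},E_j^{\vee})$. Similarly, the product polarization corresponds under $p$ to the diagonal matrix $\mathrm{diag}(\lambda_1,\lambda_2)$, with inverse $\mathrm{diag}(\lambda_1^{-1},\lambda_2^{-1})$.

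Second, I would perform the matrix multiplication
\[ r_{\lambda_1 \times \lambda_2}(\alpha) \;=\; (\lambda_1 \times \lambda_2)^{-1}\,\alpha^{\vee}\,(\lambda_1 \times \lambda_2), \]
whose $(i,j)$-entry is $\lambda_i^{-1}\alpha_{ji}^{\vee}\lambda_j$. Imposing $r_{\lambda_1\times \lambda_2}(\alpha)=\alpha$ and reading entry by entry, the diagonal conditions at $(1,1)$ and $(2,2)$ become precisely $\alpha_{ii}\in \End_{\lambda_i}(E_i)$, while the $(1,2)$-condition becomes $\alpha_{12}=\lambda_1^{-1}\alpha_{21}^{\vee}\lambda_2=\alpha_{21}'$. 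The $(2,1)$-condition is then automatic: using $\lambda_i^{\vee}=\lambda_i$ (principal polarizations are self-dual) together with $\alpha^{\vee\vee}=\alpha$, one checks that $\lambda_2^{-1}(\alpha_{21}')^{\vee}\lambda_1=\alpha_{21}$ is forced. This gives the displayed description of $\End_{\lambda_1\times\lambda_2}(E_1\times E_2)$.

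Third, the map $\phi$ is manifestly well defined, injective and surjective from the characterization: a triple $(\alpha_1,\alpha_2,\beta)$ assembles into a matrix of the required form, and conversely such a matrix is determined by its diagonal entries together with its lower-left entry. The reformulation in terms of Néron–Severi groups is obtained by combining $\phi$ with the three functorial identifications $\Phi_{\lambda_i}:\NS(E_i)\xrightarrow{\sim}\End_{\lambda_i}(E_i)$ and $\Phi_{\lambda_1\times\lambda_2}:\NS(A)\xrightarrow{\sim}\End_{\lambda_1\times\lambda_2}(A)$ supplied by the previous proposition, together with the standard identification of $\Hom(E_1,E_2)$ with the transcendental (graph) part of $\NS(E_1\times E_2)$ via $\beta\mapsto [\Gamma_\beta]$.

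I do not expect a serious obstacle: the argument is a bookkeeping exercise in matrix calculus on abelian varieties. The only point that requires care is confirming the matrix form of $\alpha^{\vee}$ under the identification $p$ and verifying the self-consistency of the two off-diagonal equations; this is where double-duality and self-duality of principal polarizations are used in an essential way.
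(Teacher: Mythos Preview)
Your argument is correct and follows the natural approach: compute the Rosati involution entrywise via the matrix identification and read off its fixed locus. The paper itself does not supply a proof of this proposition, merely citing \cite[Prop.~60]{kani2016}; your outline is essentially the argument one finds there, so there is nothing substantive to contrast.
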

Finally, since $\NS(E) \cong \Z$ by the degree map, we can be more precise. 
\begin{corollary} \textnormal{\cite[Prop.~62]{kani2016}} \label{corollaryKani}  The above isomorphism  
    \[ \phi: \Z  \oplus \Z  \oplus \Hom(E_1,E_2) \xrightarrow{\sim} \NS(A)\]
     is explicitly given by \[(a, b, f) \mapsto (a-1) \mathrm{Im}(e_1) + (b- \deg f) \mathrm{Im}(e_2) + \Gamma_{f},\] where $\Gamma_f$ is the divisor associated to the graph $\{ (P,f(P))\}_{P \in E_1}$.
\end{corollary}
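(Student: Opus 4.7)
The plan is to make the isomorphism $\phi$ of Proposition~\ref{Kani's prop} explicit by computing the image under $\Phi_{\lambda_1 \times \lambda_2}: \NS(A) \xrightarrow{\sim} \End_{\lambda_1 \times \lambda_2}(A)$ of each of the three divisor classes $[\mathrm{Im}(e_1)]$, $[\mathrm{Im}(e_2)]$ and $[\Gamma_f]$, and then to express $\phi(a,b,f)$ as a $\Z$-linear combination of these. Throughout, I would use the identification $A^\vee \cong E_1^\vee \times E_2^\vee$ induced by $p = p_1^\vee + p_2^\vee$, so that $\lambda$ and $\lambda^{-1}$ become diagonal matrices and the block description of $\End_{\lambda_1\times\lambda_2}(A)$ from Proposition~\ref{Kani's prop} is immediately available.

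For the fibre classes, one uses that $\mathrm{Im}(e_1) = p_2^{-1}(0_{E_2})$ and $\mathrm{Im}(e_2) = p_1^{-1}(0_{E_1})$, so that the associated line bundles are pullbacks of $\mathcal{O}(0_{E_j})$. The standard identity $\lambda_{h^*\mathcal{L}} = h^\vee \circ \lambda_\mathcal{L} \circ h$ applied to $h = p_j$ gives the corresponding polarizations, and composing with $\lambda^{-1}$ shows that $[\mathrm{Im}(e_1)]$ and $[\mathrm{Im}(e_2)]$ correspond to triples with trivial $\Hom$ component and a single degree-one contribution in the $\Z\oplus\Z$ part.

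Next, the graph $\Gamma_f$ is cut out by the morphism $h = p_2 - f \circ p_1 : A \to E_2$, so that $\mathcal{O}_A(\Gamma_f) = h^* \mathcal{O}(0_{E_2})$. Expanding $h^\vee \lambda_2 h$ as a $2\times 2$ matrix and composing with $\lambda^{-1}$ produces off-diagonal entries involving $f$ and its dual isogeny $\hat f := \lambda_1^{-1} f^\vee \lambda_2 \in \Hom(E_2,E_1)$, and diagonal entries $\hat f \circ f$ and $\mathrm{id}_{E_2}$. The classical identity $\hat f \circ f = \deg(f) \cdot \mathrm{id}_{E_1}$ for the dual isogeny of elliptic curves then identifies $[\Gamma_f]$ with a triple whose $\Z \oplus \Z$ part involves $\deg f$ and $1$. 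The $\Z$-linearity of $\phi^{-1}$ then forces the coefficients $(a-1)$ and $(b - \deg f)$ of $\mathrm{Im}(e_1)$ and $\mathrm{Im}(e_2)$ in the claimed formula: they are chosen precisely to cancel the extra contributions of $\Gamma_f$ in the $\Z\oplus\Z$ slot and so to yield $(a,b,f)$.

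The only substantive input beyond matrix bookkeeping is the elliptic-curve identity $\hat f \circ f = \deg(f) \cdot \mathrm{id}_{E_1}$. The main pitfall I anticipate is tracking the identification $A^\vee \cong E_1^\vee \times E_2^\vee$ induced by $p$ and the action of the Rosati involution $r_{\lambda_1,\lambda_2}$, so that every matrix one writes down genuinely lies in $\End_{\lambda_1 \times \lambda_2}(A)$ in the form displayed in Proposition~\ref{Kani's prop}; once the three generator images are recorded, the corollary follows by inverting a trivial $3 \times 3$ linear system.
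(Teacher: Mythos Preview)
The paper does not give its own proof of this corollary; it is stated with a citation to \cite[Prop.~62]{kani2016} and used as a black box. Your strategy---computing $\Phi_{\lambda_1\times\lambda_2}$ on $\mathrm{Im}(e_1)$, $\mathrm{Im}(e_2)$ and $\Gamma_f$ via the pullback identity $\lambda_{h^*\mathcal L}=h^\vee\lambda_{\mathcal L}h$ and then inverting the resulting triangular system---is exactly the standard way to make Proposition~\ref{Kani's prop} explicit, and is essentially how Kani proceeds.

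One word of caution on the bookkeeping you flagged yourself: with $h=p_2-f\circ p_1$ one finds $\lambda^{-1}h^\vee\lambda_2 h=\begin{psmallmatrix}\hat f f & -\hat f\\ -f & 1\end{psmallmatrix}$, so the off-diagonal entry is $-f$ rather than $f$, and $\mathrm{Im}(e_1)=p_2^{-1}(0_{E_2})$ lands in the $\End_{\lambda_2}(E_2)$ slot (not the $\End_{\lambda_1}(E_1)$ slot) under $\Phi_\lambda$. These sign and swap issues are purely conventional---they depend on how one orients the identification $A^\vee\cong E_1^\vee\times E_2^\vee$ and on Kani's choice of $\phi$ versus its precomposition with a coordinate swap/negation---but you should track them carefully so that the final formula you obtain matches the one actually used in Lemma~\ref{computationNS} and Lemma~\ref{inverseimagedivisor}. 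Beyond that, the argument is complete once you invoke $\hat f\circ f=\deg(f)\cdot\mathrm{id}_{E_1}$, as you note.
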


\begin{lemma}\label{inverseimagedivisor}
    Given an isogeny $f:E_2 \to E_1$, denote with $\Gamma^{-1}_f$ the divisor $\{(f(P),P) \}_{P \in E_2} \in \NS(E_1 \times E_2)$. Then the following relation holds 
    \[ \Gamma^{-1}_f= \Gamma_{f^{\vee}} + (\deg f -1) (\mathrm{Im}(\mathrm{e_1})-\mathrm{Im}(\mathrm{e_2})) \]
    as divisors in $\NS(E_1 \times E_2)$.
\end{lemma}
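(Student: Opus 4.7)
The plan is to apply the isomorphism $\phi$ of Corollary \ref{corollaryKani} to both sides of the claimed identity. Since $\deg f^{\vee} = \deg f$, one has
\[
\Gamma_{f^{\vee}} + (\deg f - 1)\bigl(\mathrm{Im}(e_1) - \mathrm{Im}(e_2)\bigr) = (\deg f - 1)\mathrm{Im}(e_1) + (1 - \deg f^{\vee})\mathrm{Im}(e_2) + \Gamma_{f^{\vee}} = \phi(\deg f,\,1,\,f^{\vee}),
\]
so the lemma reduces to proving $\Gamma^{-1}_f = \phi(\deg f, 1, f^{\vee})$ in $\NS(E_1 \times E_2)$. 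By the uniqueness in Corollary \ref{corollaryKani}, write $\Gamma^{-1}_f = \phi(a, b, g)$ for unique $a, b \in \Z$ and $g \in \Hom(E_1, E_2)$; the task is then to verify $a = \deg f$, $b = 1$, and $g = f^{\vee}$.

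The first two coefficients fall out of intersection theory on the abelian surface $E_1 \times E_2$. Using $\mathrm{Im}(e_i)^2 = 0$, $\mathrm{Im}(e_1) \cdot \mathrm{Im}(e_2) = 1$, together with $\Gamma_h \cdot \mathrm{Im}(e_1) = \deg h$ (counting preimages of $0$ under $h$) and $\Gamma_h \cdot \mathrm{Im}(e_2) = 1$ (the single point $(0, h(0)) = (0,0)$) for any $h \in \Hom(E_1, E_2)$, bilinearity gives $\phi(a, b, g) \cdot \mathrm{Im}(e_1) = b$ and $\phi(a, b, g) \cdot \mathrm{Im}(e_2) = a$. From the explicit description $\Gamma^{-1}_f = \{(f(P), P) : P \in E_2\}$ one computes $\Gamma^{-1}_f \cdot \mathrm{Im}(e_1) = 1$ (the unique intersection point is $(0,0)$) and $\Gamma^{-1}_f \cdot \mathrm{Im}(e_2) = \#\ker f = \deg f$, so $b = 1$ and $a = \deg f$.

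The main step is identifying $g = f^{\vee}$. For this I would use the seesaw description of the Hom-component: for a class $D$ on $E_1 \times E_2$ without vertical components, the $\Hom(E_1, E_2)$-part in the decomposition of Corollary \ref{corollaryKani} is the morphism $\varphi_D : E_1 \to \Pic^0(E_2) \cong E_2$ sending $x \mapsto [D|_{\{x\} \times E_2}] - [D|_{\{0\} \times E_2}]$; one checks on $D = \Gamma_h$ that $\varphi_{\Gamma_h} = h$. Applied to $D = \Gamma^{-1}_f$, the fibre over $x \in E_1$ via the first projection is the divisor $f^{*}[x]$ on $E_2$, hence $\varphi_{\Gamma^{-1}_f}(x) = f^{*}([x] - [0])$ in $\Pic^0(E_2)$. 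Under the canonical principal polarization identifications $E_i \cong \Pic^0(E_i)$, the pullback $f^{*} : \Pic^0(E_1) \to \Pic^0(E_2)$ is precisely the dual isogeny $f^{\vee} : E_1 \to E_2$, giving $g = f^{\vee}$. Combining the three computations yields $\Gamma^{-1}_f = \phi(\deg f, 1, f^{\vee})$, which equals the right-hand side of the lemma by the first paragraph. The only genuinely subtle point is this seesaw identification of $g$; the intersection-number bookkeeping that fixes $a$ and $b$ is routine.
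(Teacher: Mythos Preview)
Your argument is correct, but it proceeds differently from the paper. The paper observes that the swap isomorphism $\sigma:E_1\times E_2\to E_2\times E_1$ carries $\Gamma_f^{-1}$ to the ordinary graph $\Gamma_f\subset E_2\times E_1$, which by Corollary~\ref{corollaryKani} corresponds to $(1,\deg f,f)$ and hence to the matrix $\begin{psmallmatrix}1&f^{\vee}\\ f&\deg f\end{psmallmatrix}\in\End_{\lambda_2\times\lambda_1}(E_2\times E_1)$. Transporting this matrix back through $\sigma$ just swaps the diagonal entries, giving $\begin{psmallmatrix}\deg f&f\\ f^{\vee}&1\end{psmallmatrix}\in\End_{\lambda_1\times\lambda_2}(E_1\times E_2)$, and a second application of Corollary~\ref{corollaryKani} reads off the claimed formula. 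So the paper never computes intersection numbers or invokes seesaw; it stays entirely inside Kani's matrix dictionary and exploits the symmetry.

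Your route instead pins down the three coordinates of $\phi^{-1}(\Gamma_f^{-1})$ directly: the integers $a,b$ via intersection with $\mathrm{Im}(e_1),\mathrm{Im}(e_2)$, and the homomorphism $g$ via the fibrewise restriction map $x\mapsto [D|_{\{x\}\times E_2}]-[D|_{\{0\}\times E_2}]$. This is more hands-on but also more self-contained: it does not require tracking how the matrix description transforms under the swap, and the identification of $g$ with $f^{\vee}$ comes straight from the definition of the dual isogeny as pullback on $\Pic^0$. The paper's argument is shorter if one has already internalised Proposition~\ref{Kani's prop}; yours makes the geometry of the three summands more transparent and would generalise more readily to situations where no convenient swap is available.
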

\begin{proof}
The divisor $\Gamma_{f}^{-1}$ corresponds to the divisor $\Gamma_f=\{P, f(P) \}_{P \in E_2} \in \NS(E_2 \times E_1)$. By Corollary \ref{corollaryKani}, it corresponds to $(1, \deg f, f) \in \NS(E_2) \oplus \NS(E_1) \oplus \Hom(E_2,E_1)$. Now, implicitly identifying $E_1$ and $E_2$ with their duals via $\lambda_1$ and $\lambda_2$, we have
\[ \phi(1, \deg f, f)= \begin{pmatrix}
   1 & f^{\vee} \\ f & \deg f 
\end{pmatrix} \in \End_{\lambda_2 \times \lambda_1}(E_2 \times E_1),\]
which corresponds to the element $ \begin{pmatrix}
   \deg f & f \\ f^{\vee} & 1
\end{pmatrix} \in \End_{\lambda_1 \times \lambda_2}(E_1 \times E_2)$. Corollary \ref{corollaryKani} implies the statement.
\end{proof}

\section{Transcendental Brauer group of \texorpdfstring{$Y_C$}{Y C}}

As mentioned in the introduction, this paper applies a  strategy similar to the one followed by Skorobogatov and Zarhin in \cite{skorobogatov2010brauer} in order to compute the transcendental Brauer group of the family of K3 surfaces studied by Van Luijk in \cite{vanluijk2007cubic} and which we now define again.

\subsection{Construction of K3 surfaces}
We will follow very closely \cite[\S 2]{vanluijk2007cubic}, trying to adopt the same notation as well.    

Let $k$ be a number field and let $C \subset \Proj^2_k$ be a smooth projective cubic curve. Let $P,Q,R$ be three (geometric) points of $C$; we say $P,Q,R$ are collinear if the divisor $(P)+(Q)+(R)$ is linear equivalent to a linear section of $C$. This simply extends the usual collinearity property in order to count tangency points and inflexion points with the suitable multiplicity. By Bezout's theorem, for every two points $P,Q$ of $C$, there exists a unique third point $R$ collinear with $P$ and $Q$ (possibly equal to $P$ or $Q$). This fact yields the following natural isomorphism 
\[ S:= C \times C \cong \{ (P,Q,R) \in C^3 : P,Q,R \text{ are collinear} \}.  \]

\noindent Let $\rho$ be the automorphism of $C^3$ that permutes cyclically the three coordinates; at the level of $C \times C $, $\rho$ maps $(P,Q)$ to $(Q,R)$ with $P,Q,R$ collinear. This defines an automorphism of $S$ of order 3. Let $X_C$ be the quotient $S/\rho$ and let $X=X_C$ when the curve $C$ is understood.

The argument on page 3 in \cite{vanluijk2007cubic} shows that $X_C$ has nine singular points, all double points, which correspond to the locus $S_3 \subset S$ of points $(P,P)$ where $P$ is one of the nine inflexion points of $C$. Let $Y_C$ (or $Y$ when the dependency on $C$ is clear) denote the blow-up of $X_C$ at its singular points; as the singular locus of $X_C$ is defined over $k$, so is $Y_C$. Moreover, defining $Y_0:=(S \setminus S_3) / \rho$, we have that $Y \setminus Y_0$ splits over $\Bar{k}$ into nine pairs of smooth (-2)-curves. The two curves within a pair intersect in one point and each pair corresponds to a geometric point of $S_3$. 

\begin{proposition}\textnormal{ \cite[2.2, 3.12]{vanluijk2007cubic}}
    The surface $Y_C$ is a smooth $K3$ surface with $\mathrm{rk}\NS(Y_C)=18+ \rk \End \Jac C$.
\end{proposition}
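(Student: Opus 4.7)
The plan is to prove the two assertions separately. For the K3 property, I would work geometrically: after base change and fixing an inflexion point of $\bar C$ as origin, $\bar C$ is identified with $\bar E := \Jac \bar C$ and $\rho$ becomes the order-3 automorphism $(P,Q)\mapsto (Q,-P-Q)$ of $\bar S = \bar E \times \bar E$. Using translation-invariance of the global 1-form $\omega$ on $\bar E$, one computes directly that $\rho^*$ acts on $\h^0(\Omega^1_{\bar S}) = \C \cdot p_1^*\omega \oplus \C \cdot p_2^*\omega$ by the matrix $\begin{psmallmatrix} 0 & -1 \\ 1 & -1 \end{psmallmatrix}$. This matrix has determinant $1$ and characteristic polynomial $t^2+t+1$, so $\rho^*$ fixes the nowhere-vanishing $2$-form $p_1^*\omega \wedge p_2^*\omega$ while acting on $\h^0(\Omega^1_{\bar S})$ with eigenvalues $\zeta_3^{\pm 1}$. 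Since the $9$ singularities of $\bar X = \bar S/\rho$ are $A_2$ rational double points, as evidenced by the $(-2)$-curve configuration described before the proposition, the resolution $Y \to X$ is crepant, giving $K_Y \cong \oo_Y$. Combined with $\h^1(\oo_{\bar Y}) = \h^1(\oo_{\bar X}) = \h^1(\oo_{\bar S})^\rho = 0$ (using $R\pi_*\oo_{\bar Y} = \oo_{\bar X}$ for RDP resolution, exactness of $\rho$-invariants in characteristic $0$, and that the $\rho$-action on $\h^1(\oo_{\bar S})$ has the same eigenvalue set $\{\zeta_3,\zeta_3^{-1}\}$ by Hodge duality), this establishes that $Y_C$ is a K3 surface.

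For the rank formula, I would chain two reductions:
\[ \rk \NS(\bar Y) \;=\; \rk \NS(\bar X) + 18 \;=\; \rk \NS(\bar S)^\rho + 18. \]
The first equality comes from the $18$ new independent $(-2)$-curves produced by resolving the $9$ $A_2$ singularities (each $A_2$ configuration being disjoint from the others and orthogonal to the pulled-back Néron--Severi classes). The second is the standard isomorphism $\NS(\bar X)_\Q \cong \NS(\bar S)^\rho_\Q$ induced by pullback along the degree-$3$ finite quotient $\bar S \to \bar X$, with inverse $\tfrac{1}{3}\pi_*$. It thus suffices to show $\rk \NS(\bar S)^\rho = \rk \End \bar E$.

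For this, I would use Proposition \ref{Kani's prop} and Corollary \ref{corollaryKani} to identify $\NS(\bar S)_\Q \cong \Q \cdot \mathrm{Im}(e_1) \oplus \Q \cdot \mathrm{Im}(e_2) \oplus \End(\bar E)_\Q$ via graphs of endomorphisms. A direct check gives $\rho^*\mathrm{Im}(e_2) = \mathrm{Im}(e_1)$ and $\rho^*\mathrm{Im}(e_1) = \Gamma_{[-1]}$ (the anti-diagonal), while Lemma \ref{inverseimagedivisor} applied to the isogeny $-(1+f)$ yields
\[ \rho^*\Gamma_f \;=\; \Gamma_{-1-f^\vee} + \bigl(\deg(1+f) - 1\bigr)\bigl(\mathrm{Im}(e_1) - \mathrm{Im}(e_2)\bigr) \]
for $f \in \End(\bar E)$. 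Using the quadratic identity $\deg(1+f) = 1 + \Tr(f) + \deg f$, with $\Tr(f) = f + f^\vee$ the Rosati trace, one checks that $\rho^*$ is a genuine $\Q$-linear endomorphism in Kani's basis, and solving $\rho^* v = v$ for $v = a\,\mathrm{Im}(e_1) + b\,\mathrm{Im}(e_2) + D_f$ reduces to $a = b = -\Tr(f)$ with $f \in \End(\bar E)_\Q$ free. This identifies the invariants with $\End(\bar E)_\Q$, giving the desired rank equality.

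The main technical obstacle is this last step: the $\rho$-action on the graph summand sends $f \mapsto -1-f^\vee$ but mixes non-linearly into the $\mathrm{Im}(e_i)$ directions through the correction in Lemma \ref{inverseimagedivisor}, and recognising the quadratic identity $\deg(1+f) = 1 + \Tr(f) + \deg f$ is essential for turning the fixed-point equation into a small, explicitly solvable linear system.
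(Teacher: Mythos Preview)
The paper does not supply a proof of this proposition; it is quoted directly from Van Luijk with a citation, so there is no argument of the paper's own to compare against. Your reconstruction is correct and essentially complete.

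A few remarks. Your computation of the $\rho^*$-action on $\NS(\bar S)$ in Kani's coordinates is exactly the calculation the paper itself carries out later in Lemma~\ref{computationNS}, where the aim is $\h^1(\rho,\NS(\bar A))=0$ rather than the rank of the invariants; the two computations share the same bullet list of images $\rho^*(1,0,0)$, $\rho^*(0,1,0)$, $\rho^*(0,0,1)$, $\rho^*(0,0,\alpha)$, and your fixed-point equation $a=b=-\Tr(f)$ can be read off from those formulas. So your approach is very much in the spirit of the surrounding text.

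One small point of hygiene: the intermediate step through $\NS(\bar X)$ is delicate because $\bar X$ is singular, so Cartier and Weil class groups differ (by the local class groups of the $A_2$ points). It is cleaner to bypass $\bar X$ entirely: use $\Pic(\bar Y)=\NS(\bar Y)$ (K3), the excision sequence for $\bar Y_0\subset \bar Y$ to get $\rk\Pic(\bar Y)=18+\rk\Pic(\bar Y_0)$ (the $18$ exceptional curves are independent by negative-definiteness of the $A_2^{\oplus 9}$ intersection form), then the Hochschild--Serre identification $\Pic(\bar Y_0)\cong\Pic(\bar S_0)^\rho=\Pic(\bar S)^\rho$ for the free \'etale $\Z/3$-cover $\bar S_0\to\bar Y_0$. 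Finally, $\Pic^0(\bar S)^\rho$ is finite (the $\rho$-fixed locus in $A^\vee(\bar k)$ is a copy of $E[3]$, by the same linear-algebra observation used in the proof of Proposition~\ref{geometric_brauer}), so $\rk\Pic(\bar S)^\rho=\rk\NS(\bar S)^\rho$ and your endpoint is reached without ever naming $\NS(\bar X)$.
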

\subsection{Equivariant resolutions of singularities}
Let $S_0:=S \setminus S_3$. The goal of this section is to find a surface $S'$, birationally equivalent to $S$ (in particular containing an open subset isomorphic to $S_0$), endowed with a proper surjective morphism $S' \to Y$ such that the diagram 
\begin{equation} 
    \xymatrix{
S' \ar[d] \ar[r] & S \ar[d]^{\pi}\\
Y \ar[r] & S/\rho} \label{diagram} 
\end{equation}
commutes. Indeed, we want to extend the action of $\rho$ to $S'$ and have $S'/\rho$ be a resolution of singularities of $S/\rho$, so that we have a birational morphism $S'/\rho \to Y$.

Let us first consider the situation at the geometric level, i.e.\ over $\Bar{k}$. We recall the considerations from \cite{vanluijk2007cubic}. Every singularity of $S/\rho$ comes from a fixed point of $\rho$ on $S$. These take the form $(P,P)$, where $P$ is any of the nine inflexion points of $C$. To work out the local behaviour around these points, we choose two copies $r,s$ of a uniformizer for $C$ at $P$. These are then local parameters for $S$ at $(P,P)$. The local description of $\rho$ is $(r,s) \mapsto (s,-r-s)$ because it corresponds to the description of the formal group law modulo squares; this describes the action of $\rho$ on the tangent space of $S$ at $(P,P)$. The subring of $\rho$-invariants in $k[r,s]$ is generated by $a=r^2+rs+s^2$, $b=-3rs(r+s)$, $c=r^3+3r^2s-s^3$, which satisfy the equation $a^3=b^2+bc+c^2$. This shows that $S/\rho$ has 9 (geometric) singularities of type $A_2$.     

Up to a linear change over $\Bar{k}$, we can describe the tangent space of $S$ at a fixed point of $\rho$ as $\mathbb{A}^2=\Spec \bar{k}[x,y]$ with $\rho$ acting by the matrix $\begin{pmatrix} \zeta_3 & 0 \\ 0 & \zeta_3^2\end{pmatrix}$, where $\zeta_3$ is a primitive cube root of unity. Let $S''$ be the blow-up of $S$ at $S_3$. The exceptional divisor above every point in $S_3$ is isomorphic to $\Proj^1_{\bar{k}}$ with the action of $\rho$ given as a projective transformation by the same matrix. Hence each exceptional divisor contains two fixed points. Consider a chart $U_0$ containing a fixed point $P_0$. Up to an étale map, $S''$ can be described by $\{xT_1=yT_0\}$ in $\mathbb{A}^2_{x,y} \times \Proj^1_{[T_0:T_1]}$. The open set $U_0$ is given by $\{ xT_1=y \}$ in $\mathbb{A}^3_{x,y,T_1}= S'' \cap \{T_0=1 \}$, where $P_0$ is $\{x=y=T_1=0\}$. Clearly $U_0 \cong \mathbb{A}^2_{x,T_1}$ and $\rho$ acts on $U_0$ by the matrix $\begin{pmatrix} \zeta_3 & 0 \\ 0 & \zeta_3\end{pmatrix}$. Therefore, the blow-up of $S''$ at $P_0$ has an exceptional divisor that is pointwise fixed by $\rho$. The reasoning is analogous for an open chart around the point $P_1=\{x=y=T_0=0 \}$. 

In summary, let $S''$ be the blow-up of $S$ at $S_3$ and let $S'$ be the blow-up of $S''$ at the locus of fixed points of the induced action of $\rho$. The computation above shows that the locus of $\rho$-fixed points in $S'$ is a divisor, so by the Chevalley--Shepard--Todd theorem the quotient $S'/\rho$ is smooth. Since $Y$ is the minimal resolution of $S/\rho$, we obtain a birational morphism $S'/\rho \to Y$, which fits into the diagram \ref{diagram}. Finally, the construction of $S'$ descends to $k$, since the blow-up locus is Galois stable.

\begin{remark}
The additional geometry of $S'$ does not affect the Brauer group, since it derives from blow-ups of the regular surface $S$. Similarly, the birational morphism $S'/\rho \to Y$ induces an isomorphism of the Brauer groups (see \cite[Cor.\ 7.2]{grothendieck}).
\end{remark}

\begin{lemma}{\label{brauer_absurface}}
The inclusion $S_0 \hookrightarrow S'$ induces an isomorphism 
$ \br(S') \xrightarrow{\sim} \br(S_0)$.
\end{lemma}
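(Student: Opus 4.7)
The plan is to prove injectivity and surjectivity of the map $\br(S') \to \br(S_0)$ separately, leveraging the blow-up morphism $f\colon S' \to S$ together with Lemma \ref{general_lemma}.

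For surjectivity, I would first note that $S \setminus S_0 = S_3$ consists of nine closed points and therefore has codimension $2$ in the smooth surface $S = C \times C$. Lemma \ref{general_lemma} then yields an isomorphism $\br(S) \xrightarrow{\sim} \br(S_0)$ induced by the open immersion $S_0 \hookrightarrow S$. Given any $\alpha \in \br(S_0)$, I would lift it to $\tilde\alpha \in \br(S)$ via this isomorphism and pull back through the blow-up, producing $f^*\tilde\alpha \in \br(S')$. Since $f$ restricts to the identity on $S_0$ (viewed simultaneously as an open subscheme of $S$ and of $S'$), the class $f^*\tilde\alpha$ restricts to $\alpha$ on $S_0$, providing the required preimage in $\br(S')$.

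For injectivity, the plan is to invoke the standard fact that for any regular integral Noetherian scheme $X$ the natural map $\br(X) \to \br(k(X))$ is injective. The surface $S'$ is smooth: this is implicit in the construction of Proposition \ref{geometry-bs}, where \'etale locally $S'$ is obtained by blowing up $\mathbb{A}^2_{\bar k}$ at the ideal $(a(r,s),b(r,s),c(r,s))$ and produces a smooth model covering the minimal resolution of the $A_2$-singularity in $\mathbb{A}^2_{\bar k}/\rho$. Since $S_0$ is open and dense in $S'$ we have $k(S_0)=k(S')$, and the map $\br(S') \to \br(S_0)$ fits into a commutative square together with the two injections into $\br(k(S'))=\br(k(S_0))$. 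This forces $\br(S') \to \br(S_0)$ itself to be injective.

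The main obstacle I anticipate is technical rather than conceptual: the entire argument rests on $S'$ being regular, a point that is only implicit in Proposition \ref{geometry-bs}. If one wished to bypass the smoothness of $S'$, an alternative injectivity argument via Grothendieck's purity theorem --- peeling off the irreducible $\mathbb{P}^1$-components of the exceptional divisors one at a time --- would be available, but controlling the residues into $\h^1_{\et}$ of the rational components over a non-algebraically closed field is more delicate than the clean generic-point argument sketched above.
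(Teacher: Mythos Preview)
Your argument is correct and follows essentially the same route as the paper: both factor through $\br(S)$ via the blow-up morphism $S'\to S$ together with Lemma \ref{general_lemma}, and both ultimately rest on the regularity of $S'$. The only cosmetic difference is that the paper quotes Grothendieck's birational invariance (\cite[Cor.~7.2]{grothendieck}) to obtain $\br(S)\xrightarrow{\sim}\br(S')$ in one stroke, whereas you unpack this into surjectivity (via the same factorisation) and injectivity (via the embedding $\br(S')\hookrightarrow\br(k(S'))$, which is precisely the mechanism behind Grothendieck's result).
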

\begin{proof}
  Since the map $S' \to S$ is a birational morphism of smooth surfaces, the induced maps $ \br(S) \to \br(S')$ and $\br(\Bar{S}) \to \br(\Bar{S'})$ are isomorphisms (\cite[Cor.\ 7.2]{grothendieck}). The composition of these injective maps 
  \[ \br(S) \xrightarrow{\sim} \br(S') \xrightarrow{} \br(S_0)  \]
  is an isomorphism by Lemma \ref{general_lemma}, so the restriction map $\br(S') \to \br(S_0)$ is an isomorphism. 
    
\end{proof}

\begin{lemma}{\label{galoiscovering}}
    The map $\pi : S_0 \to Y_0:=S_0/\rho$ is a finite flat covering of regular surfaces such that $k(S_0)/k(Y_0)$ is Galois of degree 3. The same holds for the map $\pi: S' \to S'/\rho$. 
\end{lemma}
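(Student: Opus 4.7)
The plan is to treat the two morphisms separately: the first is a quotient by a free action, while the second requires the étale-local analysis from Proposition \ref{geometry-bs}. For $S_0 \to Y_0$, note that $S_3 \subset S$ is by construction the fixed locus of $\rho$, so $\langle \rho \rangle \cong \Z/3\Z$ acts freely on $S_0 = S \setminus S_3$. The quotient morphism is therefore finite étale of degree $3$ and Galois with group $\Z/3\Z$; in particular it is finite flat. Both $S_0$ (as an open subscheme of the smooth surface $C \times C$) and $Y_0$ (by faithfully flat descent of regularity along the étale cover) are regular, and the function-field extension $k(S_0)/k(Y_0) = k(S_0)/k(S_0)^{\langle\rho\rangle}$ is Galois of degree $3$ by construction.

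For $S' \to Y$ I would verify the required properties in turn. The morphisms $S' \to S$ and $Y \to X := S/\rho$ are both blow-ups, hence birational, so $k(S') = k(S)$ and $k(Y) = k(S)^{\langle\rho\rangle}$; thus $k(S')/k(Y)$ is Galois of degree $3$. Finiteness follows from quasi-finiteness plus properness: the morphism $S' \to Y$ is proper (since $S' \to X$ is proper and $Y \to X$ is separated), and its fibres are zero-dimensional because over $Y_0$ it restricts to the étale cover $S_0 \to Y_0$, while over each exceptional divisor of $Y$ it is an isomorphism by Proposition \ref{geometry-bs}. A proper quasi-finite morphism is finite.

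It remains to establish regularity of source and target, and then flatness. The surface $Y$ is smooth as a K3 surface, so the only serious issue, and the main obstacle of the proof, is regularity of $S'$: the blow-up is taken along the non-reduced subscheme $\pi^{-1}\mathcal{I}$, and blow-ups along non-reduced centres can introduce singularities. Since regularity is étale-local and the Cartesian-square argument from the proof of Proposition \ref{geometry-bs} identifies $S'$ étale-locally with $\bl_{\pi^{-1}\mathcal{I}}\mathbb{A}^2_{\bar{k}}$, the question reduces to regularity of this explicit blow-up, which can be checked by a direct chart computation; the exceptional divisor being a pair of transverse smooth rational curves is compatible with a smooth ambient surface. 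Once both $S'$ and $Y$ are known to be regular surfaces, the miracle flatness theorem implies that the finite surjective morphism $S' \to Y$ between regular schemes of the same dimension is automatically flat, completing the argument.
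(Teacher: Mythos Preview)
Your argument is correct and is, in fact, more careful than the paper's own proof, which is very terse.

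For $S_0 \to Y_0$ you use a different mechanism: you observe that $\rho$ acts freely on $S_0$, so the quotient map is finite \'etale of degree~$3$, hence Galois with group $\Z/3\Z$, and regularity of $Y_0$ follows by \'etale descent. The paper instead appeals directly to miracle flatness (finite surjective between regular surfaces is flat) and then gives a separate function-field argument to verify that $\rho$ acts faithfully on $k(S_0)$ and that $k(Y_0) = k(S_0)^{\langle\rho\rangle}$. Your route is cleaner and avoids having to check regularity of $Y_0$ independently.

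For $S' \to Y$ the paper simply says that ``the same conclusions hold'' by birational invariance of function fields, tacitly using that $S'$ is regular (asserted in Lemma~\ref{brauer_absurface} but not proved) and that the map is finite. You correctly isolate the two nontrivial points the paper glosses over: finiteness (which you obtain from properness plus quasi-finiteness, using Proposition~\ref{geometry-bs} for the behaviour over the exceptional locus) and regularity of $S'$ (which is genuinely delicate since the blow-up centre $\pi^{-1}\mathcal I$ is non-reduced). Your reduction of the regularity question to the explicit model $\bl_{\pi^{-1}\mathcal I}\mathbb{A}^2_{\bar k}$ via the \'etale-local identification from Proposition~\ref{geometry-bs} is the right move; the remaining chart computation is routine, and as you note, the shape of the exceptional divisor is consistent with (though does not by itself imply) smoothness of the ambient surface. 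Once both regularity statements are in hand, you and the paper both finish with miracle flatness.
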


\begin{proof}
 Every finite surjective map of regular varieties is flat, so $\pi : S_0 \to Y_0$ is flat (from \cite[Remark\ 4.3.11]{liu2006algebraic}).  
Moreover, $\langle \rho \rangle= \Aut_{Y_0}(S_0)$ induces nontrivial automorphism of $k(S_0)$ over $k(Y_0)$, because we can find a coordinate $f \in k(S_0)$ such that $f(x)=0$ and $f(\rho(x)) \neq 0$ for some $x \in S_0$.  Now $k(S_0) / k(S_0)^{\langle \rho \rangle}$ is a Galois extension of degree $3$ and, since $k(Y_0) \subset k(S_0)^{\langle \rho \rangle} \subset k(S_0)$ with $k(S_0)/k(Y_0)$ degree 3, we have that $k(Y_0) = k(S_0)^{\langle \rho \rangle}$.
Since the function field is a birational invariant, the same conclusions hold for the map of regular surfaces $S' \to S'/\rho$.

\end{proof}

\noindent The surface $S'$ allows us to relate the Brauer groups of $Y_0=S_0/\rho$ and $Y$.
\begin{lemma}{\label{lemma_maledetto}} For every $n$, we have a natural injection of groups \[ \br(Y)_n \hookrightarrow \br(Y_0)_n, \] which is an isomorphism for $n$ coprime to 3. 
\end{lemma}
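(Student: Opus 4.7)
The plan is to first establish the injection for all $n$ using regularity of $Y$, and then handle the coprime-to-$3$ case via the Galois covering argument, combining Proposition \ref{galois_covering} with Lemma \ref{brauer_absurface}.

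For the injection, I would observe that since $Y$ is a smooth K3 surface, it is regular, so by Grothendieck's theorem on the Brauer group of a regular integral scheme the restriction to the generic point gives an injection $\br(Y) \hookrightarrow \br(k(Y))$. The same holds for the smooth open subvariety $Y_0 \hookrightarrow Y$, and since $Y_0$ is dense in $Y$ we have $k(Y) = k(Y_0)$. Thus the composition
\[ \br(Y) \longrightarrow \br(Y_0) \longrightarrow \br(k(Y_0)) = \br(k(Y)) \]
is injective, so the restriction map $\br(Y) \to \br(Y_0)$ is injective, and in particular so is its restriction to the $n$-torsion for any $n$.

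For the isomorphism when $\gcd(n,3) = 1$, I would consider the commutative square induced by the two Galois coverings from Lemma \ref{galoiscovering},
\[ \xymatrix{ \br(Y)_n \ar[r] \ar[d]^{\pi^*} & \br(Y_0)_n \ar[d]^{\pi^*} \\ \br(S')_n \ar[r]^{\sim} & \br(S_0)_n,  } \]
where the bottom arrow is the isomorphism given by Lemma \ref{brauer_absurface}. Because the open immersion $S_0 \hookrightarrow S'$ and the covering maps are all $\langle \rho \rangle$-equivariant, this bottom isomorphism restricts to an isomorphism on $\langle \rho \rangle$-invariants. Applying Proposition \ref{galois_covering} to the Galois coverings $S' \to Y$ and $S_0 \to Y_0$ of degree $3$ identifies the two vertical pullbacks with isomorphisms
\[ \br(Y)_n \xrightarrow{\sim} \br(S')_n^{\langle \rho \rangle}, \qquad \br(Y_0)_n \xrightarrow{\sim} \br(S_0)_n^{\langle \rho \rangle}. \]
A small diagram chase then forces the top horizontal arrow to be the isomorphism obtained by composing these three.

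The only subtlety I anticipate is making sure all the horizontal arrows in the square are compatible, i.e.\ that the restriction map $\br(S') \xrightarrow{\sim} \br(S_0)$ really is $\langle \rho \rangle$-equivariant and that the square above genuinely commutes; both follow because the open inclusion and the covering morphism are both defined over $k$ and commute with the action of $\rho$. Beyond that, the argument is a mechanical combination of the already-established Lemma \ref{brauer_absurface}, Lemma \ref{galoiscovering} and Proposition \ref{galois_covering}.
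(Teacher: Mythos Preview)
Your proposal is correct and follows essentially the same argument as the paper: the injection comes from restriction to an open in a regular surface, and the isomorphism for $n$ coprime to $3$ is obtained by applying Proposition \ref{galois_covering} to both coverings $S'\to Y$ and $S_0\to Y_0$ and linking them via Lemma \ref{brauer_absurface}. The paper is more terse (it simply asserts the injection and then strings together the three isomorphisms), whereas you spell out the factorization through $\br(k(Y))$ and the $\langle\rho\rangle$-equivariance; but there is no substantive difference in strategy.
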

\begin{proof}
The inclusion $Y_0 \subset Y$ induces an injection  $\br(Y)\hookrightarrow \br(Y_0)$. Moreover, by Proposition \ref{galois_covering} and Lemma \ref{galoiscovering} we have $\br(Y)_n \cong \br(S')^{\rho}_n$ for $(n,3)=1$. We also have $\br(S')^{\rho}_n \cong \br(S_0)^{\rho}_n$ by Lemma \ref{brauer_absurface} and, finally, $\br(S_0)^{\rho}_n \cong \br(Y_0)_n$ by Proposition \ref{galois_covering} again. 
\end{proof}

\subsection{Geometric Brauer groups}

In order to relate the transcendental Brauer groups of $S$ and $Y$, we first need to study the relationship between their geometric Brauer groups.

 Let $A$ be the abelian variety $E \times E$, where $E= \Jac(C)$, so that $\Bar{S} \cong \Bar{A}$. We start by explicitly computing $\h^1(\rho, \NS(\Bar{A}))$ using the following description of $\NS(\bar{E} \times \bar{E})$: 
\[ \NS(\bar{E} \times \bar{E}) \cong \Z \cdot  e \oplus \End(\bar{E}) \oplus \Z \cdot e' \]
where $e$ and $e'$ are the divisors $\bar{E} \times \{ 0_E\}$ and $\{ 0_E \} \times \bar{E}$, respectively. 

\begin{lemma} \label{computationNS}
We have $\h^1(\rho, \NS(\Bar{A}))=0$.
\end{lemma}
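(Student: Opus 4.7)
The plan for computing $H^1(\langle \rho \rangle, \NS(\bar A))$ is a direct matrix calculation: use the isomorphism $\NS(\bar A) \cong \End_\lambda(\bar A)$ of Proposition~\ref{Kani's prop} to express $\rho^*$ explicitly, and then apply the cyclic-group formula $H^1(\Z/3\Z, M) = \ker N / \mathrm{Im}(\rho^* - 1)$ with $N = 1 + \rho^* + (\rho^*)^2$.

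The first task is to identify $\rho$ with a concrete endomorphism of $\bar A$. Fixing an inflection point $O \in \bar C$ as origin, the Abel--Jacobi map identifies $\bar S \cong \bar A$ and turns the collinearity $P+Q+R \sim 3O$ into $P+Q+R = 0$ in $\bar E$. The automorphism $\rho(P,Q) = (Q,R)$ of $\bar S$ then becomes the group endomorphism $(x,y) \mapsto (y, -x-y)$ of $\bar A$, with matrix and dual (matrix transpose, since integer entries are Rosati-fixed)
\[ \rho = \begin{pmatrix} 0 & 1 \\ -1 & -1 \end{pmatrix}, \qquad \rho^\vee = \begin{pmatrix} 0 & -1 \\ 1 & -1 \end{pmatrix}, \]
and $\rho^3 = 1$.

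Next I would compute $\rho^*$ on $\End_\lambda(\bar A)$. Mumford's identity $\lambda_{\rho^* D} = \rho^\vee \lambda_D \rho$, combined with the product polarization identifying $\bar E \cong \bar E^\vee$, transports $\rho^*$ to $\alpha \mapsto \rho^\vee \alpha \rho$. For $\alpha = \begin{psmallmatrix} n_1 & f^\vee \\ f & n_2 \end{psmallmatrix}$ in the Kani parametrisation, a direct multiplication gives
\[ \rho^*(n_1, f, n_2) = \bigl(n_2,\; n_2 - f^\vee,\; n_1 + n_2 - \Tr(f)\bigr). \]
Summing the three iterates yields $N(n_1, f, n_2) = (2(n_1+n_2)-\Tr(f),\; n_1+n_2+3f-2\Tr(f),\; 2(n_1+n_2)-\Tr(f))$, and the vanishing of $N$ forces $f = n_1 + n_2 \in \Z$, hence $\ker N = \{(n_1,\, n_1+n_2,\, n_2) : n_1, n_2 \in \Z\}$.

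Finally, the explicit formula $(\rho^* - 1)(m_1, g, m_2) = (m_2 - m_1,\; m_2 - \Tr(g),\; m_1 - \Tr(g))$ shows that any kernel element is already in the image: taking $g = 0$, $m_1 = n_2$, $m_2 = n_1 + n_2$ produces the triple $(n_1, n_1+n_2, n_2)$. Hence $\ker N = \mathrm{Im}(\rho^* - 1)$ and $H^1(\rho, \NS(\bar A)) = 0$. The main technical care is required in the transport step, making sure to match Mumford's pullback formula with Kani's matrix parametrisation while tracking duals and the Rosati involution; once the action is pinned down as $\alpha \mapsto \rho^\vee \alpha \rho$, the rest is elementary linear algebra in $\End(\bar E)$.
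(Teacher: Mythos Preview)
Your argument is correct and follows the same overall strategy as the paper: identify $\NS(\bar A)$ via Kani's parametrisation, compute the $\rho^*$-action explicitly, and verify $\ker N = \mathrm{Im}(\rho^*-1)$.

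The one genuine difference lies in \emph{how} the action of $\rho^*$ is computed. The paper works geometrically on divisors: it evaluates $\rho^*$ on the graph divisors $\Gamma_0, e', \Gamma_1, \Gamma_\alpha$ directly, which requires the auxiliary Lemma~\ref{inverseimagedivisor} relating $\Gamma_f^{-1}$ to $\Gamma_{f^\vee}$, and then splits into CM and non-CM cases. You instead invoke Mumford's functorial identity $\lambda_{\rho^*D} = \rho^\vee \lambda_D \rho$ to transport $\rho^*$ to the conjugation $\alpha \mapsto r_\lambda(\rho)\,\alpha\,\rho$ on $\End_\lambda(\bar A)$, reducing everything to a single $2\times 2$ matrix product. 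This is cleaner: it handles both the CM and non-CM cases uniformly (the endomorphism ring $\End(\bar E)$ never has to be written out in a $\Z$-basis), and it bypasses Lemma~\ref{inverseimagedivisor} entirely. The paper's approach has the advantage of being more concrete at the level of divisors, which could be useful if one wanted to track specific classes; yours is shorter and more structural.
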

\begin{proof}
 
 The group $\h^1(\rho, \NS(\Bar{A}))$ can be described as \[\{ x \in \NS(\Bar{A}) : (1+\rho+\rho^2)(x)=0 \}/(\rho -1)\NS(\Bar{A}).\]
Our computational strategy will be to show that $(\rho -1)\NS(\Bar{A})$ is a primitive sublattice of $\NS(\Bar{A})$ and that  $(1+\rho+\rho^2)(x) \neq 0$ for all $x \in \NS(\Bar{A}) \setminus (\rho -1)\NS(\Bar{A})$. 
\\
Let $\phi: \Z \cdot e  \oplus \Z \cdot e'   \oplus \End(\bar{E}) \to \NS(\bar{A})$ be the isomorphism given by Corollary \ref{corollaryKani} \[(a,  b, f) \mapsto (a-1) e + (b- \deg f) e' + \Gamma_{f}\] where $\Gamma_f$ is the divisor associated to the graph $\{ (P,f(P))\}_{P \in E}$.   We study the induced action of $\rho$ on the left-hand side using $\phi$ and the action of $\rho^*$ on $\NS(\bar{A})$. 

First, suppose that $E$ has CM by an order $\oo$, which is generated by $\alpha$. Let $x^2+cx+d \in \Z[x]$ be the minimal polynomial of $\alpha$. Note that $\Gamma_0=e$.
\begin{itemize}
    \item $\rho(1,0,0)=\rho(\phi^{-1}(\Gamma_0)) = \phi^{-1}(\rho^*(\Gamma_0))=\phi^{-1} (\Gamma_{-1})=(1, 1, -1) $;
    \item $\rho(0,1,0)= \phi^{-1}(\rho^*(e'))=\phi^{-1}(e)=(1,0,0)$;
    \item $\rho(0,0, 1)=\phi^{-1}(\rho^*(-e + \Gamma_{1}-e' )) = (-1,-1, 1) + (-1,0,0) + \phi^{-1}(\rho^*(\Gamma_{1}))$; we have that $\rho^*(\Gamma_{1})$ is the divisor supported on $\Gamma^{-1}_{-2}=\{(-2P,P)\}_{P \in E}$. Using Lemma \ref{inverseimagedivisor}, we have $\phi^{-1}(\rho^*(\Gamma_{1}))=(4,1,-2)$, thus 
    $\rho(0,0,1)= (-2, -1, 1) + (4,1,-2)= (2,0,-1)$;
    \item $\rho(0,0, \alpha)=\phi^{-1}(\rho^*(-e+\Gamma_{\alpha}-d\cdot e'))= (-1,-1,1) + (-d,0,0) +  \phi^{-1}(\rho^*(\Gamma_{\alpha}))$; as before, Lemma \ref{inverseimagedivisor} implies $\rho^*(\Gamma_{\alpha}) = \Gamma^{-1}_{-1-\alpha}=\phi^{-1}(1-c+d, 1, (-1-\alpha)^{\vee})$. Using that $(-1-\alpha)^{\vee}=-1+ \alpha +c$, we obtain $\rho(0,0,\alpha)=(-d-1,-1,1)+(1-c+d,1,-1+\alpha+c)=(-c,0,\alpha+c)$.
\end{itemize}
The sublattice $(\rho -1)\NS(\Bar{A})$ is the primitive sublattice generated by $(0, 1, -1)$ and $(1,0,-1)$; a direct complement of it in $\NS(\Bar{A})$ is the sublattice generated by $(0,1,0)$ and $(0,0,\alpha)$. Calculations lead to $(1+\rho+\rho^2)(0,1,0)=(2,2,-1)$ and $(1+\rho+\rho^2)(0,0,\alpha)=(-c,-c, 3\alpha+2c)$, hence any nontrivial $\Z$-linear combination of these cannot vanish. 

\noindent The computation is analogous for a non-CM elliptic curve $E$. 
\end{proof}

\begin{proposition}{\label{geometric_brauer}}
    The natural map $\pi^*: \br(\Bar{Y}_0) \to \br(\Bar{S}_0)$ is an isomorphism, which induces an isomorphism of $\Gamma$-modules $\br(\Bar{Y}) \to \br(\Bar{S})$; in particular, the action of $\rho$ on $\br(\Bar{S})$ is trivial.
\end{proposition}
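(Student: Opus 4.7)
The plan is to prove $\pi^*: \br(\bar Y_0) \to \br(\bar S_0)^{\rho}$ is injective via the Hochschild--Serre spectral sequence for the étale Galois cover $\pi: \bar S_0 \to \bar Y_0$, and then extract the full $\Gamma$-equivariant isomorphism $\br(\bar Y) \cong \br(\bar S)$ by a corank comparison.

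Since $\bar S_3$ is exactly the $\rho$-fixed locus, $\rho$ acts freely on $\bar S_0$, so $\pi$ is étale; combined with Lemma \ref{galoiscovering}, it is an étale Galois $\Z/3\Z$-cover. Lemma \ref{general_lemma} applied to the codimension-two complement of $\bar S_0$ in $\bar S \cong \bar A$ gives $\bar k[\bar S_0]^{*} = \bar k^{*}$, $\Pic(\bar S_0) = \Pic(\bar A)$, and $\br(\bar S_0) = \br(\bar S)$. The Hochschild--Serre spectral sequence
\[
E_2^{p,q} = \h^p(\rho, \h^q_{\et}(\bar S_0, \G_m)) \Rightarrow \h^{p+q}_{\et}(\bar Y_0, \G_m),
\]
together with the cyclic-group cohomology $\h^\bullet(\Z/3, \bar k^{*}) \in \{\bar k^{*}, \mu_3, 0, \mu_3, 0, \dots\}$, produces a seven-term low-degree exact sequence in which $\ker \pi^*$ embeds into $\h^1(\rho, \Pic(\bar A))$. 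To kill this group, I would use the $\rho$-equivariant sequence $0 \to \Pic^0(\bar A) \to \Pic(\bar A) \to \NS(\bar A) \to 0$: Lemma \ref{computationNS} gives $\h^1(\rho, \NS(\bar A)) = 0$, and since $\rho \in \End(\bar A)$ satisfies $1 + \rho + \rho^2 = 0$ (its minimal polynomial is $x^2+x+1$) while $\rho - 1$ is an isogeny of determinant $3$, the induced (dual) action on $\Pic^0(\bar A) = \bar A^{\vee}(\bar k)$ yields $\h^1(\rho, \Pic^0(\bar A)) = \Pic^0(\bar A)/(\rho-1)\Pic^0(\bar A) = 0$, because $\rho-1$ is surjective on $\bar k$-points of the abelian variety $\bar A^{\vee}$.

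A corank count then promotes injectivity to the claimed isomorphism. By the structure discussion in Section 2, $\br(\bar Y) \cong (\Q/\Z)^{4 - \rk \End \bar E}$ (using $b_2(Y) = 22$ and $\rk \NS(\bar Y) = 18 + \rk \End \bar E$) and $\br(\bar S) \cong (\Q/\Z)^{4 - \rk \End \bar E}$ (using $b_2(A) = 6$ and $\rk \NS(\bar A) = 2 + \rk \End \bar E$ from Corollary \ref{corollaryKani}) have equal corank. Since both $\br(\bar Y)$ and $\br(\bar Y_0)$ embed in $\br(\bar k(Y))$ by smoothness of $\bar Y$, the restriction $\br(\bar Y) \hookrightarrow \br(\bar Y_0)$ is injective, and composing with $\pi^*$ gives an injection $\br(\bar Y) \hookrightarrow \br(\bar S)^{\rho} \hookrightarrow \br(\bar S)$ between divisible groups of equal corank. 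On each $n$-torsion subgroup this is an injection of finite groups of equal order, hence an isomorphism, forcing all inclusions to be equalities: $\br(\bar Y) \cong \br(\bar S)$ as $\Gamma$-modules (equivariance is automatic since every morphism is defined over $k$), $\rho$ acts trivially on $\br(\bar S)$, and consequently $\pi^*: \br(\bar Y_0) \xrightarrow{\sim} \br(\bar S_0)$. The main obstacle is the spectral-sequence bookkeeping: one must carefully check that the seven-term sequence indeed places $\ker \pi^*$ inside $\h^1(\rho, \Pic(\bar A))$, and that the corank argument suffices to close the loop without direct control of the higher differentials that would otherwise bound the cokernel of $\pi^*$ on the $3$-primary part, where Proposition \ref{galois_covering} provides no information.
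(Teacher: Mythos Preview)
Your proposal is correct and follows essentially the same approach as the paper: the Hochschild--Serre spectral sequence for the étale $\Z/3\Z$-cover $\bar S_0 \to \bar Y_0$, the vanishing of $\h^1(\rho,\Pic(\bar A))$ via the $\Pic^0/\NS$ sequence together with Lemma~\ref{computationNS} and surjectivity of $\rho-1$ on $\bar A^{\vee}(\bar k)$, and then the corank comparison $b_2(\bar Y)-\rho(\bar Y)=b_2(\bar S)-\rho(\bar S)$ to upgrade the injection to an isomorphism. Your closing worry is unwarranted: the exact sequence you invoke really does bound $\ker\pi^*$ by $\h^1(\rho,\Pic(\bar A))$ (since $E_2^{2,0}=\bar k^{\times}/\bar k^{\times 3}=0$), and the corank argument genuinely bypasses any need to control the cokernel of $\pi^*$ directly.
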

\begin{proof}

 Consider the short exact sequence 
\[ 0 \to A^{\vee}(\Bar{k}) \to \Pic(\Bar{A}) \to \NS(\Bar{A}) \to 0 \]
on which $\rho$ acts via pullbacks. Thus, we have a long exact sequence of group cohomology involving the terms
\[ \cdots \to \h^1(\rho, A^{\vee}(\Bar{k})) \to \h^1(\rho, \Pic(\Bar{A})) \to \h^1(\rho, \NS(\Bar{A})) \to \cdots. \]
Notice that $\rho$ acts on $E \times E$ by mapping $(P,Q) \mapsto (Q, -P-Q)$, so on $(E \times E)^{\vee}$ by mapping $(P,Q) \mapsto (-P-Q,P)$. Then, $(\rho -1)(P,Q)=(-2P-Q, P-Q)$, which is surjective on $A^{\vee}(\Bar{k})$, hence $\h^1(\rho, A^{\vee}(\Bar{k}))=0$. 
Using Lemma \ref{computationNS}, the exact sequence in group cohomology shows that $\h^1(\rho, \Pic(\Bar{A}))=0$.

By Lemma \ref{galoiscovering} $\pi: S_0 \to Y_0$ is a flat Galois covering with Galois group $\Z /3$, therefore there is the associated Hochschild--Serre spectral sequence (\cite[Thm 14.9]{milneLEC})
\[ \h^p(\Z /3, \h^q_{\et} (\Bar{S}_0, \mathbb{G}_m)) \Longrightarrow \h^{p+q}_{\et}(\Bar{Y}_0, \G_m),\] 
which yields the following exact sequence: 
\[ \cdots \to \h^2(\rho, \h^0_{\et}(\Bar{S}_0,\G_m)) \to \ker\bigg( 
\h^2_{\et}(\Bar{Y}_0, \G_m) \to \h^0(\rho, \h^2_{\et}(\Bar{S}_0,\G_m)) \bigg) \to \h^1(\rho, \h^1_{\et}(\Bar{S}_0,\G_m)) \to \cdots. \]
Moreover, \[\h^2(\rho, \h^0_{\et}(\Bar{S}_0,\G_m))= \h^2(\rho, \Bar{k}^{\times})= \Bar{k}^\times/\Bar{k}^{\times 3} =0,\] \[\h^1(\rho, \h^1_{\et}(\Bar{S}_0,\G_m))=\h^1(\rho, \Pic(\Bar{S}))=0,\] 
so we deduce an injection $\br(\Bar{Y}) \hookrightarrow \br(\Bar{Y}_0) \hookrightarrow \h^0(\rho, \br(\Bar{S}_0)) \hookrightarrow \br(\Bar{S}_0) \cong \br(\Bar{S})$.
To conclude the proof, it is sufficient to notice that $\br(\Bar{Y})$ and $\br(\Bar{S}))$ are both abstractly isomorphic to $(\Q/\Z)^2$ since $b_2(\Bar{Y})=22$, $b_2(\Bar{S})=6$ from general facts on K3 surfaces/abelian surfaces and $\rho(\Bar{Y})=16+\rho(\Bar{S})$ by \cite[Prop.\ 3.11, 3.12]{vanluijk2007cubic}.
\end{proof}
\begin{remark}
Consider the exact sequence coming from the Kummer sequence \[ 0 \to \NS(\Bar{S}) \otimes \Z/l^n \to \h^2_{\et}(\Bar{S},\mu_{l^n}) \to \br(\Bar{S})_{l^n} \to 0. \]
In the Kummer surface case, the action of the involution $[-1]_{\Bar{S}}$ induces a trivial action on $\h^2_{\et}(\Bar{S},\mu_{l^n}) = \bigwedge^{2} \h^1_{\et}(\Bar{S},\mu_{l^n})$, whereas the action of $\rho$ on $\h^2_{\et}(\Bar{S},\mu_{l^n}) $ is not trivial because it is nontrivial on $\NS(\Bar{S})$. Nevertheless, we are lucky since the nontriviality of the action of $\rho$ comes only from the Néron--Severi group, i.e.\ the action on the quotient $\br(\Bar{S})_{l^n}$ is still trivial and this allows a result analogous to \cite[Thm 2.4]{skorobogatov2010brauer}.     
\end{remark}

\begin{remark} \label{remark_SVK}
The proof of Proposition \ref{geometric_brauer} also shows that the injection $\br(\Bar{Y}) \hookrightarrow \br(\Bar{Y_0})$ is an isomorphism. This can also be shown directly using the sequence 
\[ 0 \to \br(\Bar{Y}) \to \br(\Bar{Y_0}) \to \bigoplus \h^1_{\et} (Z_i, \Q / \Z),  \]
where the sum ranges over all connected components of $\bar{Y}\setminus \bar{Y}_0$. Indeed, every $Z_i$ is isomorphic to $\Proj^1_{\bar{k}} \vee \Proj^1_{\bar{k}}$ (because it arises from the resolution of a $A_2$-singularity), hence $\h^1_{\et} (\Proj^1_{\bar{k}} \vee \Proj^1_{\bar{k}}, \Q / \Z)=0$ since $\pi_1^{top}(\Proj^1_{\C} \vee \Proj^1_{\C}, \Bar{x})=0$ by the Seifert--Van Kampen Theorem.
 \end{remark}

\subsection{Transcendental Brauer groups}

A final lemma before proving the main result of this section:

\begin{lemma}\label{mainlemma}
    For every $n$, we have a natural injection \[ \frac{\br(Y)_n}{\br_1(Y)_n} \hookrightarrow \frac{\br(Y_0)_n}{\br_1(Y_0)_n},  \] which is an isomorphism for $n$ coprime to 3.
\end{lemma}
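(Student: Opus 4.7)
The plan is to deduce this by a short diagram chase from two facts already in hand: Lemma \ref{lemma_maledetto} (the map $\br(Y)_n \hookrightarrow \br(Y_0)_n$ is injective, and an isomorphism when $(n,3)=1$) together with the isomorphism $\br(\bar Y) \xrightarrow{\sim} \br(\bar Y_0)$ from Remark \ref{remark_SVK}. First I would write down the commutative diagram of short exact sequences
\[
\begin{tikzcd}
0 \ar[r] & \br_1(Y)_n \ar[r] \ar[d] & \br(Y)_n \ar[r] \ar[d, hook] & \br(Y)_n/\br_1(Y)_n \ar[r] \ar[d] & 0 \\
0 \ar[r] & \br_1(Y_0)_n \ar[r] & \br(Y_0)_n \ar[r] & \br(Y_0)_n/\br_1(Y_0)_n \ar[r] & 0
\end{tikzcd}
\]
with vertical arrows induced by the open inclusion $Y_0 \hookrightarrow Y$.

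The pivotal observation is that by definition $\br_1(X) = \ker(\br(X) \to \br(\bar X))$, so compatibility of the restriction maps with the isomorphism of Remark \ref{remark_SVK} forces
\[ \br_1(Y)_n \;=\; \br(Y)_n \cap \br_1(Y_0)_n \]
inside $\br(Y_0)_n$; equivalently, the leftmost square is cartesian, and in particular the left vertical arrow is injective. A direct chase then settles both claims. For injectivity of the rightmost arrow, I would take $x \in \br(Y)_n$ whose image lies in $\br_1(Y_0)_n$, so that the image of $x$ in $\br(\bar Y_0)$ vanishes; by injectivity of $\br(\bar Y) \to \br(\bar Y_0)$, its image in $\br(\bar Y)$ also vanishes, i.e.\ $x \in \br_1(Y)_n$. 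For surjectivity when $(n,3)=1$, any class in $\br(Y_0)_n/\br_1(Y_0)_n$ lifts to some $y \in \br(Y_0)_n$, and Lemma \ref{lemma_maledetto} lifts $y$ in turn to $\br(Y)_n$, whose image is the chosen class.

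No step here is a serious obstacle: all the substantive input has already been delivered by Lemma \ref{lemma_maledetto} and Proposition \ref{geometric_brauer}/Remark \ref{remark_SVK}, and what remains is only the bookkeeping needed to pass from statements about the $n$-torsion Brauer groups to the transcendental quotients.
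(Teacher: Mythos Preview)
Your proof is correct and is essentially the same as the paper's: both reduce the injectivity to the identity $\br_1(Y)_n = \br(Y)_n \cap \br_1(Y_0)_n$ (proved via the isomorphism $\br(\bar Y)_n \xrightarrow{\sim} \br(\bar Y_0)_n$ of Remark~\ref{remark_SVK}), and both deduce the isomorphism for $(n,3)=1$ directly from Lemma~\ref{lemma_maledetto}. The only difference is presentational---you frame it as a diagram chase on short exact sequences rather than a commutative square.
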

\begin{proof}
 For every $n$, we have 
 \[ \xymatrix{
\br(Y)_n \ar[d] \ar[r]^{i^*} & \br(Y_0)_n \ar[d]\\
\br(\Bar{Y})_n \ar[r]^{\sim} & \br(\Bar{Y_0})_n}
\] 
where $i^*: \br(Y)_n \to \br(Y_0)_n$ is the injective map induced by the inclusion. The isomorphism in the diagram follows from Remark \ref{remark_SVK}. In order to quotient by the algebraic Brauer groups, we need  \[i^* \br(Y)_n \cap \br_1(Y_0)_n= i^* \br_1(Y)_n.  \]
From one side we have $i^* \br_1(Y)_n \subset i^* \br(Y)_n$ and $i^* \br_1(Y)_n \subset \br_1(Y_0)_n$. On the other hand, if $i^*x \in i^* \br(Y)_n \cap \br_1(Y_0)_n $ then, $i^*\Bar{x}=0$ hence $\Bar{x}=0$ by the isomorphism, implying $x \in \br_1(Y)_n$. 
The statement for $n$ coprime to 3 follows by Lemma \ref{lemma_maledetto}.
\end{proof}

\begin{theorem} \label{main_thm}
    The map $\pi^*: \br(Y_0) \to \br(S_0)$ induces an embedding 
    \[ \frac{\br(Y)_n}{\br_1(Y)_n} \hookrightarrow \frac{\br(S)_n}{\br_1(S)_n},  \]
    which is an isomorphism if $n$ is coprime to 3. The subgroups of elements of order coprime to 3 of the transcendental Brauer groups $\br(Y)/\br_1(Y)$ and $\br(S)/\br_1(S)$ are isomorphic.  
\end{theorem}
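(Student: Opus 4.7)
The plan is to factor the map in the theorem as a composition and analyse each piece separately:
\[ \frac{\br(Y)_n}{\br_1(Y)_n} \hookrightarrow \frac{\br(Y_0)_n}{\br_1(Y_0)_n} \xrightarrow{\pi^*} \frac{\br(S_0)_n}{\br_1(S_0)_n} = \frac{\br(S)_n}{\br_1(S)_n}. \]
The leftmost embedding is Lemma \ref{mainlemma}, which is an isomorphism for $n$ coprime to $3$. The rightmost equality comes from the fact that $S \setminus S_0$ consists of nine closed points, so has codimension $2$; Lemma \ref{general_lemma} then gives $\br(S) = \br(S_0)$ and $\br(\Bar{S}) = \br(\Bar{S}_0)$, and hence $\br_1(S) = \br_1(S_0)$. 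The heart of the proof is therefore the middle arrow.

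To see that $\pi^*$ induces a well-defined injection on these quotients, I would first note that $\pi^*$ carries $\br_1(Y_0)$ into $\br_1(S_0)$ by functoriality of the map $\br(\bullet) \to \br(\bar{\bullet})$. For injectivity, if $\alpha \in \br(Y_0)_n$ satisfies $\pi^*\alpha \in \br_1(S_0)_n$, then $\pi^*\Bar{\alpha} = 0$ in $\br(\Bar{S}_0)$. Proposition \ref{geometric_brauer}, combined with the identification $\br(\Bar{Y}) = \br(\Bar{Y}_0)$ of Remark \ref{remark_SVK} and the analogous identification on the $S$-side, shows that $\pi^*: \br(\Bar{Y}_0) \to \br(\Bar{S}_0)$ is an isomorphism; hence $\Bar{\alpha} = 0$ and so $\alpha \in \br_1(Y_0)_n$.

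For $n$ coprime to $3$, I would establish surjectivity of the middle arrow by an averaging argument. By Lemma \ref{galoiscovering} and Proposition \ref{galois_covering}, $\pi^*$ identifies $\br(Y_0)_n$ with $\br(S_0)^{\rho}_n$. Given any $\beta \in \br(S_0)_n$, choose $m \in \Z$ with $3m \equiv 1 \pmod{n}$ and set $\gamma := m(1 + \rho^* + \rho^{*2})\beta$. Then $\gamma$ is manifestly $\rho$-invariant, and since $\rho$ acts trivially on $\br(\Bar{S}_0) = \br(\Bar{S})$ by Proposition \ref{geometric_brauer}, we obtain $\Bar{\gamma} = 3m\Bar{\beta} = \Bar{\beta}$ in $\br(\Bar{S}_0)_n$. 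Thus $\beta - \gamma \in \br_1(S_0)_n$ and $\gamma \in \pi^*\br(Y_0)_n$, which yields surjectivity at the level of $n$-torsion quotients. Composing with the corresponding isomorphism from Lemma \ref{mainlemma} gives the isomorphism claimed in the first part of the theorem for such $n$. The second assertion of the theorem then follows by passing to the colimit over $n$ coprime to $3$.

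The main obstacle is ensuring that the averaging step in the last paragraph truly interacts correctly with the algebraic filtration: the argument only closes because the $\rho$-action on $\br(\Bar{S}_0)$ is trivial, so that $(1+\rho^*+\rho^{*2})\beta$ and $3\beta$ have the same image geometrically. This triviality is the substantive geometric input from Proposition \ref{geometric_brauer}, and the restriction to $n$ coprime to $3$ is intrinsic, being required both for Proposition \ref{galois_covering} and for inverting $3$ on the $n$-torsion during the averaging.
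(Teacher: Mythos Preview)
Your proof is correct and follows essentially the same route as the paper: the same factorisation through $Y_0$ and $S_0$, the same use of Lemma~\ref{mainlemma}, Lemma~\ref{general_lemma}, Proposition~\ref{galois_covering} and Proposition~\ref{geometric_brauer}, and the same averaging trick (the paper writes it as $3x = (x+\rho x+\rho^2 x) + (x-\rho x) + (x-\rho^2 x)$, you as $\gamma = m(1+\rho^*+\rho^{*2})\beta$ with $3m\equiv 1\pmod n$, which is equivalent).

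One small point: your final sentence ``passing to the colimit over $n$ coprime to $3$'' hides a step the paper spells out. An element of $\br(Y)/\br_1(Y)$ of order coprime to $3$ need not a priori lift to $\br(Y)_n$ for some $n$ coprime to $3$; the lift $x$ could have order divisible by $3$ in $\br(Y)$. The paper fixes this by replacing $x$ with $3^{a'}x$ for a suitable $a'$, which kills the $3$-part of the order of $x$ while leaving its class in $\br(Y)/\br_1(Y)$ unchanged (since $3^{a'}\equiv 1$ modulo the order of that class). This is elementary, but worth making explicit.
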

\begin{proof}

By Proposition \ref{geometric_brauer}, the  commutative diagram
\[ \xymatrix{
\br(Y_0)_n \ar[d] \ar[r] & \br(S_0)_n \ar[d]\\
\br(\Bar{Y_0})_n \ar[r]^{\sim} & \br(\Bar{S_0})_n}
\]
 gives the embedding $\br(Y_0)_n/\br_1(Y_0)_n \hookrightarrow \br(S_0)_n/\br_1(S_0)_n \cong \br(S)_n/\br_1(S)_n$. Hence Lemma \ref{mainlemma} implies the existence of the required embedding for all $n$.  
 
Consider now $(n,3)=1$. The natural map $\br(S) \to \br(\Bar{S})$ naturally commutes with $\rho$. Since $\rho$ acts trivially on $\br(\Bar{S})$ by Proposition \ref{geometric_brauer},  we have that $\rho(\Bar{x})-\Bar{x}=0$ for any $x \in \br(S)_n $. Therefore $\rho(x)-x \in \br_1(S)_n$ and, similarly, $\rho^2(x)-x \in \br_1(S)_n$. 
Then, \[ 3x = (x+ \rho(x) +\rho^2(x)) + (x - \rho(x))+(x-\rho^2(x)) \in \br(S)_n^{\rho} + \br_1(S)_n\]
and the assumption that $(n,3)=1$ implies that $\br(S)_n=\br(S)_n^{\rho} + \br_1(S)_n$, hence 
\[ \br(S)_n/\br_1(S)_n = \br(S)_n^{\rho}/\br_1(S)_n^{\rho}. \]
Moreover, we obtain the  diagram   
\[ \xymatrix{
\br(Y_0)_n \ar[d] \ar[r]^{\sim} & \br(S)_n^{\rho} \ar[d]\\
\br(\Bar{Y_0})_n \ar[r]^{\sim} & \br(\Bar{S})_n}
\]
by applying Proposition \ref{galois_covering} to the triple covering $\pi:S_0 \to Y_0$ together with $\br(S_0)=\br(S)$ by Lemma \ref{brauer_absurface}.
Combining the diagram with Lemma \ref{mainlemma} proves the first statement. 

Finally, the second statement follows from the claim that an element of order coprime to 3 in $\br(Y)/\br_1(Y)$ comes from $\br(Y)_n$ for some $n$ coprime to 3. The proof of this claim only uses that the Brauer group is torsion. \\
Indeed, let us fix $x \in [x] \in (\br(Y)/\br_1(Y))_n$ with $(n,3)=1$; we have $n \cdot x \in \br_1(Y)$, so there exists $m>0$ such that $mn\cdot x=0$, so $x \in \br(Y)_{mn}$. Let $m=3^am'$ with $(m',3)=1$; there exists $a'>a$ such that 
\[ x':= 3^{a'} \cdot x \in [x] \in (\br(Y)/\br_1(Y))_n\]
 by considering any $a'$ such that $3^{a'} \equiv 1 \pmod{n}$. The proof is concluded, since $x' \in [x]$ and $ x' \in \br(Y)_{m'n}$ with $(m'n,3)=1$.

\end{proof}

\section{Transcendental Brauer group of \texorpdfstring{$S$}{S}}

In the previous section we have related the transcendental Brauer groups  of $S=C \times C$ and of the K3 surface $Y$. Let us now study the former, exploiting the fact that $S$ is a torsor of an abelian surface. 

In this section, we assume $C$ to be a diagonal cubic curve: $ax^3+by^3+cz^3=0$, defined over $\Q$, so we can assume $a,b,c$ to be a primitive triple of integers.  Let $L$ be a cubic extension of $\Q$ such that $C_L$ has a rational point, e.g.\ the number field $\Q(\sqrt[3]{\frac{a}{b}})$. We have $C_L \cong E_L$, where $E=\Jac(C)$ is described by $y^2=x^3-2^43^3a^2b^2c^2$. The equation of the Jacobian derives from classical results like \cite{AN2001304}.  
Notice that this elliptic curve has CM by $\oo_K$ with $K=\Q(\sqrt{-3})$, as expected from the diagonal form of the curve $C$. 

The compatibility of the restriction maps 
\[ \xymatrix{ \br(C \times C) \ar[r] \ar[dr] & \br(E_L \times E_L) \ar[d] \\ & \br(\bar{C} \times \bar{C})} \]
 implies the injection $\br(C \times C)/\br_1(C \times C) \hookrightarrow \br(E_L \times E_L)/\br_1(E_L \times E_L)$.
In full generality, we would need to study the right hand side, but, away from 3, we can reduce this to $\br(E \times E)$ thanks to the following proposition.  

\begin{proposition}\textnormal{\cite[3.3]{uniformK3} }  \label{VAVisom}
Let $N$ be a positive integer, let $X$ and $X'$ be principal homogeneous spaces
of an abelian variety $A$, defined over the same field, and let $f : X' \to X$ be an $N$-covering. Then, for
any integer $n$ coprime to $N$, we have the isomorphism
\[f^*: \frac{\br(X)_n}{\br_1(X)_n} \xrightarrow{\sim} \frac{\br(X')_n}{\br_1(X')_n}. \]
\end{proposition}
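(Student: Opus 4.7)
The plan is to exploit the fact that, geometrically, an $N$-covering of torsors under $A$ becomes a translate of the multiplication-by-$N$ isogeny $[N]\colon A\to A$. Fixing compatible identifications $\bar X\cong A\cong\bar{X'}$ coming from $\bar k$-points, $\bar f$ factors as $t_a\circ[N]$ for some $a\in A(\bar k)$. Translations on an abelian variety are isotopic to the identity, so they act trivially on $\h^i(\bar A,\mu_m)$; and $[N]$ acts as multiplication by $N$ on $\h^1(\bar A,\mu_m)$, hence as $N^2$ on $\h^2(\bar A,\mu_m)=\bigwedge^2\h^1(\bar A,\mu_m)$ and therefore on $\br(\bar A)$. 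The identification $\br(\bar X)\cong\br(\bar A)\cong\br(\bar{X'})$ is canonical as $\Gamma_k$-modules (translations acting trivially on Brauer), and under it $\bar f^*$ is multiplication by $N^2$; in particular, for $(n,N)=1$ it is an automorphism of $\br(\bar X)_n$.

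Injectivity then follows from the commutative square
\[\xymatrix{
\br(X)_n/\br_1(X)_n \ar@{^{(}->}[r] \ar[d]^{f^*} & \br(\bar X)_n^{\Gamma_k} \ar[d]^{\bar f^*}_{\wr} \\
\br(X')_n/\br_1(X')_n \ar@{^{(}->}[r] & \br(\bar{X'})_n^{\Gamma_k}
}\]
whose rows are injective by the definition of $\br_1$.

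For surjectivity I would use the Hochschild--Serre spectral sequence $\h^p(k,\h^q_{\et}(\bar X,\G_m))\Rightarrow\h^{p+q}_{\et}(X,\G_m)$, which identifies the image of $\br(X)$ in $\br(\bar X)^{\Gamma_k}$ with $\ker(d_2^{0,2})$, where $d_2^{0,2}\colon\br(\bar X)^{\Gamma_k}\to\h^2(k,\Pic(\bar X))$, and similarly for $X'$. Given $y\in\br(X')_n$ with image $\bar y$, set $\bar x:=N^{-2}\bar y\in\br(\bar X)_n^{\Gamma_k}$; by naturality, $f^*(d_2\bar x)=d_2'(\bar f^*\bar x)=d_2'\bar y=0$. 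It thus suffices to show that $f^*\colon\h^2(k,\Pic(\bar X))_n\to\h^2(k,\Pic(\bar{X'}))_n$ is injective, which one reads off the long exact sequence associated to $0\to A^\vee(\bar k)\to\Pic(\bar X)\to\NS(\bar A)\to 0$: the induced $\bar f^*$ is multiplication by $N$ on $A^\vee(\bar k)=\Pic^0(\bar A)$ (dual isogeny of $[N]$) and by $N^2$ on $\NS(\bar A)$, both invertible on $n$-torsion. Hence $d_2\bar x=0$, so $\bar x$ lifts to $\br(X)$, and by an argument as in the last paragraph of the proof of Theorem \ref{main_thm} one obtains an $n$-torsion lift $x\in\br(X)_n$ with $f^*(x)\equiv y$ modulo $\br_1(X')_n$.

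The main technical obstacle is to unwind the definition of an $N$-covering precisely enough to obtain the geometric factorisation $\bar f=t_a\circ[N]$, and to verify the relevant naturality diagrams for $d_2^{0,2}$ together with the translation identifications; once these are in place everything reduces to the ``multiplication-by-$N^a$'' behaviour of the induced maps on cohomology.
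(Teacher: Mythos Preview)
The paper does not prove this proposition; it simply cites \cite[3.3]{uniformK3}. So there is no ``paper's own proof'' to compare against, and your proposal is an attempt to supply one.

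Your injectivity argument is fine: the key input, that translations act trivially on $\br(\bar A)$ and that $[N]^*$ acts as $N^2$ there, gives a canonical $\Gamma_k$-equivariant identification $\br(\bar X)\cong\br(\bar{X'})$ under which $\bar f^*$ is multiplication by $N^2$, and the square you wrote down does the rest.

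The surjectivity argument, however, has two gaps. First, the image of $\br(X)\to\br(\bar X)^{\Gamma_k}$ is \emph{not} in general equal to $\ker(d_2^{0,2})$: one must also kill the higher differentials $d_r^{0,2}$ for $r\geq 3$, and $E_2^{3,0}=\h^3(k,\bar k^\times)$ need not vanish. Second, your proof that $f^*$ is injective on $\h^2(k,\Pic(\bar X))_n$ via the short exact sequence $0\to A^\vee(\bar k)\to\Pic(\bar X)\to\NS(\bar A)\to 0$ is not a clean five-lemma argument: knowing that the outer maps are multiplication by $N$ and $N^2$ does not immediately give injectivity of the middle map on $n$-torsion, because $n$-torsion is not an exact functor and the connecting maps in the long exact sequence interfere.

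A much cleaner route to surjectivity, and likely the one in the cited reference, is to use the transfer $f_*\colon\br(X')\to\br(X)$ for the finite \'etale map $f$. Over $\bar k$ the cover is Galois with group $A[N](\bar k)$ acting by translations, so $\bar f^*\bar f_*=\sum_{g\in A[N](\bar k)} g^*=N^{2\dim A}\cdot\id$ on $\br(\bar{X'})$. Hence for $y\in\br(X')_n$ one has $f^*f_*y\equiv N^{2\dim A}\, y \pmod{\br_1(X')_n}$, and since $N^{2\dim A}$ is a unit modulo $n$ the class of $y$ lies in the image of $f^*$. This bypasses the spectral-sequence bookkeeping entirely.
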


Fortunately, the transcendental Brauer groups of products of CM elliptic curves have been extensively studied in $\cite{rachel2015}, \cite{corrigendumRachel}$. We now record the required results from that paper, stated directly under our assumptions on $L$ and $K$.  

\begin{theorem}\textnormal{\cite[2.9 \& 2.2]{rachel2015}}{\label{main thm Rachel}} Let $E/L$ be an elliptic curve with CM by $\oo_K$; let $l$ be a prime number, then \[ \bigg(\frac{\br(E \times E )}{\br_1(E \times E )} \bigg)_{l^{\infty}}= \bigg(\frac{\br(E \times E )_{l^m}}{\br_1(E \times E )_{l^m}} \bigg) \cong \Z/l^m, \] 
where $m=m(l)$ is the largest integer $k$ such that every prime $\mathfrak{q}$ of $KL$ of good reduction for $E_{KL}$ and coprime to $l$ satisfies $\psi_{E/KL}(\mathfrak{q}) \in \oo_{l^k}=\Z + l^k \oo_K$ for the Hecke character $\psi_{E/KL}$. Let $n(l)$ be the largest integer $k$ such that the ring class field $K_{l^k}$ of the order $\oo_{l^k}$ embeds into $KL$; then, $m(l) \leq n(l)$.  
\end{theorem}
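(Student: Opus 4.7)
The plan is to combine the Hochschild--Serre spectral sequence with explicit CM theory, following the strategy of Skorobogatov--Zarhin adapted to the CM setting. First I would invoke Hochschild--Serre for $A := E \times E$,
\[ \h^p(\Gamma_L, \h^q_{\et}(\bar{A}, \G_m)) \Rightarrow \h^{p+q}_{\et}(A, \G_m), \]
to extract the exact sequence
\[ 0 \to \br_1(A) \to \br(A) \to \br(\bar{A})^{\Gamma_L} \to \h^3(\Gamma_L, \bar{L}^{\times}). \]
Since $L$ is a number field, the $l$-primary part of the last term is annihilated by $2$, so for odd $l$ the problem reduces to computing $(\br(\bar{A})^{\Gamma_L})\{l\}$. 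To make the CM action rational I would base change to $KL$ and later descend along the degree-at-most-$2$ extension $KL/L$ (harmless for odd $l$).

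Next, I would use the Kummer sequence
\[ 0 \to \NS(\bar{A})/l^n \to \h^2_{\et}(\bar{A}, \mu_{l^n}) \to \br(\bar{A})_{l^n} \to 0 \]
together with the identification $\h^2_{\et}(\bar{A}, \mu_{l^n}) \cong \bigwedge^2 \h^1_{\et}(\bar{A}, \mu_{l^n})$ coming from $A$ being an abelian surface. Writing $V_n := \h^1_{\et}(\bar{E}, \mu_{l^n})$, the decomposition
\[ \bigwedge^2(V_n \oplus V_n) \cong \bigwedge^2 V_n \oplus (V_n \otimes V_n) \oplus \bigwedge^2 V_n \]
isolates the transcendental contribution inside the cross term $V_n \otimes V_n$; the two outer summands are pulled back from the factor Brauer groups (which vanish for an elliptic curve) and are absorbed by the image of $\NS$. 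The CM hypothesis makes $V_n$ a free rank-one $\oo_K/l^n$-module, so $V_n \otimes V_n \cong \oo_K/l^n \otimes_{\Z/l^n} \oo_K/l^n$, and the image of $\NS(\bar{A})/l^n$ is spanned by the graphs of the CM endomorphisms of $E$, i.e.\ the $\oo_K$-balanced tensors. The quotient is then a cyclic $\oo_K/l^n$-module from which one extracts a cyclic $\Z/l^n$-factor.

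Finally, CM theory presents the action of $\Gamma_{KL}$ on $V_n$ via the Hecke character $\psi := \psi_{E/KL}$: a Frobenius at a good unramified prime $\mathfrak{q}$ coprime to $l$ acts as multiplication by $\psi(\mathfrak{q}) \in \oo_K$. Transporting this through the transcendental quotient of $V_n \otimes V_n$, invariance modulo $l^k$ becomes the congruence $\psi(\mathfrak{q}) \equiv \bar{\psi}(\mathfrak{q}) \pmod{l^k}$, equivalently $\psi(\mathfrak{q}) \in \Z + l^k \oo_K = \oo_{l^k}$; by Chebotarev it suffices to impose this at Frobenius elements. Taking $m(l)$ to be the largest such $k$ produces the cyclic group of order $l^{m(l)}$. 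The bound $m(l) \leq n(l)$ is then class field theory: the ring class field $K_{l^k}$ is by construction the abelian extension of $K$ whose Artin symbols factor through $\oo_{l^k}^{\times}$, so $\psi(\mathfrak{q}) \in \oo_{l^k}$ for every $\mathfrak{q}$ of $KL$ forces $K_{l^k} \subseteq KL$. The main obstacle will be the analysis at $l = 3$ in our intended application, where $l$ ramifies in $K = \Q(\sqrt{-3})$ and the clean splitting of $V_n \otimes V_n$ into algebraic and transcendental eigenlines degenerates; handling this requires a careful local computation of the Galois action at the ramified prime, together with a separate argument to rule out the Hochschild--Serre differential when $l = 2$.
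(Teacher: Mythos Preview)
The paper does not prove this statement at all: it is quoted directly from Newton's paper \cite{rachel2015} (Theorems~2.2 and~2.9 there), with no argument supplied. So there is nothing to compare your proposal against in this paper.

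That said, your outline is a faithful sketch of Newton's actual argument, modulo some imprecisions. The reduction to $\br(\bar A)^{\Gamma_L}$ is correct, but the obstruction to surjectivity of $\br(A)/\br_1(A)\to\br(\bar A)^{\Gamma_L}$ is not controlled by $\h^3(\Gamma_L,\bar L^\times)$ alone; over a number field one invokes Skorobogatov--Zarhin's result that this map is an isomorphism for abelian varieties, which is proved via a more careful analysis of the spectral sequence and the divisibility of $\Pic^0$. Your identification of the transcendental piece inside $V_n\otimes V_n$ modulo the $\oo_K$-balanced tensors is the right picture, and the translation of Galois invariance into the congruence $\psi(\qq)\in\Z+l^k\oo_K$ is exactly the mechanism. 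The inequality $m(l)\le n(l)$ via the ring class field is also correct. Your closing remark about $l=3$ conflates two things: nothing special happens at $l=3$ in the proof of the theorem as stated; the delicate point in the paper's application (Theorem~\ref{thm on C x C}) is rather that Theorem~\ref{main_thm} only gives an inequality at $l=3$, so the $3$-primary transcendental part of $\br(Y_C)$ has to be bounded by computing $m(3)$ for $E_L$ directly.
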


Moreover, Newton explicitly computes the transcendental Brauer group of $E \times E$ for a CM elliptic curve $E: y^2=x^3+D$ defined over $\Q$. 

\begin{proposition}\textnormal{\cite[Example\ 3]{rachel2015}} \label{rachelexample} 
Let $E$ be the elliptic curve over $\Q$ with equation $y^2=x^3+D$ with $D \in \Z \setminus \{ 0\}$. Then 
\[ \frac{\br(E \times E)}{\br_1(E \times E)} = \begin{cases} \Z/2\Z &\text{ if } D \text{  is a   cube in } \Z; \\
  \Z/3\Z &\text{ if } 4D \text{  is a   cube in } \Z;  \\ 0 &\text{ otherwise.}
\end{cases} \]
\end{proposition}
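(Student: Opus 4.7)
The plan is to apply Theorem~\ref{main thm Rachel} with $L = \Q$, so $KL = K = \Q(\sqrt{-3})$, and to determine the integer $m(l)$ for every prime $l$. The result will then follow by assembling the $l$-primary contributions into a single cyclic group.

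First I would bound $m(l) \le n(l)$ by computing the ring class fields $K_{l^k}$ of the orders $\oo_{l^k} = \Z + l^k \oo_K$ and checking when they embed into $K$. Since $K$ has class number one and $[\oo_K^\times : \{\pm 1\}] = 3$, the standard formula
\[ h(\oo_f) = \frac{f}{3} \prod_{p \mid f}\!\left(1 - \left(\frac{-3}{p}\right)\!\frac{1}{p}\right) \qquad (f>1) \]
yields $[K_{l^k} : K] = h(\oo_{l^k})$. Direct computation gives $[K_2:K]=1$ but $[K_4:K]=2$, so $n(2)=1$; likewise $[K_3:K]=1$ and $[K_9:K]=3$, so $n(3)=1$; and for every $l \ge 5$ one has $[K_l:K] = (l - (\tfrac{-3}{l}))/3 \ge 2$, so $n(l)=0$. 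Hence only the primes $l \in \{2,3\}$ can contribute, and each contributes at most $\Z/l\Z$. In particular $\br(E\times E)/\br_1(E\times E)$ is a subgroup of $\Z/6\Z$.

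Next I would determine when each of $m(2)=1$ and $m(3)=1$ actually occurs, by analysing the Hecke character $\psi_D$ attached to $E_D: y^2 = x^3 + D$. Because $E_D$ is a sextic twist of $E_1 : y^2 = x^3 + 1$, one can write $\psi_D = \chi_D \cdot \psi_1$, where $\chi_D$ is the sextic character coming from the $6$-th power residue symbol $\bigl(\tfrac{D}{\cdot}\bigr)_6$, and $\psi_1(\mathfrak{q})$ is the primary generator of $\mathfrak{q}$ (the unique generator $\pi$ with $\pi \equiv -1 \pmod 3$ in $\oo_K$). For $l=2$, which is inert in $K$, the condition $\psi_D(\mathfrak{q}) \in \Z + 2\oo_K$ amounts to $\psi_D(\mathfrak{q}) \bmod 2$ lying in $\F_2 \subset \F_4 = \oo_K/2\oo_K$; chasing this through the factorisation $\psi_D = \chi_D\psi_1$ reduces to the cubic part of $\chi_D$ being trivial, i.e.\ to $D$ being a cube in $\Q^\times$. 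For $l = 3$, which is ramified with $(3) = (\sqrt{-3})^2$, the analogous condition $\psi_D(\mathfrak{q}) \in \Z + 3\oo_K$ reduces to a congruence of $\chi_D$ modulo $\sqrt{-3}$; after unpacking the primary normalisation of $\psi_1$ and using that the quadratic part of $\chi_D$ is controlled modulo $\sqrt{-3}$, this becomes precisely the condition that $4D$ be a cube in $\Q$.

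The main obstacle is the second step: one must make $\psi_D$ sufficiently explicit to verify the congruences $\psi_D(\mathfrak{q}) \in \Z + l\oo_K$ for every prime $\mathfrak{q}$ of $K$ of good reduction coprime to $l$, and to trace through why the resulting conditions take the asymmetric form ``$D$ a cube'' (for $l=2$) versus ``$4D$ a cube'' (for $l=3$). The appearance of the factor $4$ in the cubic condition at $3$ arises from the primary normalisation of $\psi_1$ relative to $3$; everything else is a routine bookkeeping of the sextic twist character, and the three cases of the proposition correspond to the mutually exclusive ways in which $D$ can satisfy at most one of the two cubic conditions, giving $\Z/2\Z$, $\Z/3\Z$, or $0$ respectively.
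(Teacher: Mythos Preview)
The paper does not supply its own proof of this proposition; it is quoted verbatim from Newton \cite[Example~3]{rachel2015}. Your outline is precisely Newton's argument, and indeed the paper reuses the same method (bound $m(l)\le n(l)$ via ring class fields, then decide $m(2)$ and $m(3)$ using the explicit Gr\"ossencharacter formula) in its proof of Theorem~\ref{thm on C x C}. So strategically you are on target.

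One correction worth making: your assertion that $\psi_1(\mathfrak q)$ equals the primary generator of $\mathfrak q$ is not accurate. From the formula $\psi_D(\mathfrak p)=\bigl(\tfrac{4D}{\pi_{\mathfrak p}}\bigr)_6^{-1}\pi_{\mathfrak p}$ one sees that $\psi_1(\mathfrak p)=\bigl(\tfrac{4}{\pi_{\mathfrak p}}\bigr)_6^{-1}\pi_{\mathfrak p}$, and the factor $\bigl(\tfrac{4}{\pi_{\mathfrak p}}\bigr)_6=\bigl(\tfrac{2}{\pi_{\mathfrak p}}\bigr)_3$ is a nontrivial cube root of unity for many $\mathfrak p$. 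It is cleaner (and it is what Newton and the paper actually do) to work directly with $\bigl(\tfrac{4D}{\pi_{\mathfrak p}}\bigr)_6^{-1}\pi_{\mathfrak p}$ rather than factor through $\psi_1$. This also makes the asymmetry between the two cases transparent without any ``unpacking'': for $l=3$ the primary normalisation already gives $\pi_{\mathfrak p}\in\Z+3\oo_K$, so the condition $\psi_D(\mathfrak p)\in\oo_3$ becomes $\bigl(\tfrac{4D}{\pi_{\mathfrak p}}\bigr)_6\in\{\pm1\}$, i.e.\ $4D$ a cube; for $l=2$ one instead analyses the reduction to $\F_4^\times$ and the condition collapses to $D$ being a cube. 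Once you replace the $\psi_1$ detour by the direct formula, the rest of your sketch goes through.
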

We recall the following property of Hecke characters:
\begin{lemma}{\label{grossencharacter}}
    Let $E/K$ be an elliptic curve with CM by $\oo_K$ and let $L$ be a number field. Let $\mathfrak{q}$ be a prime in $KL$ lying over the prime $\mathfrak{p}$ in $K$. Then, \[\psi_{E/KL}(\mathfrak{q})=\psi_{E/K}(\mathfrak{p})^{f_{\mathfrak{q}/\mathfrak{p}}},\] where $f_{\mathfrak{q}/\mathfrak{p}}$ is the relative inertia degree.   
\end{lemma}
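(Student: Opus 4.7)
The plan is to derive the lemma from the standard compatibility of Hecke characters associated to CM elliptic curves under base change. Specifically, if $M/K$ is a finite extension of number fields and $E/K$ has CM by $\oo_K$, then one has the functorial identity $\psi_{E/M} = \psi_{E/K} \circ N_{M/K}$, which is a classical result (see e.g.\ Silverman's \emph{Advanced Topics in the Arithmetic of Elliptic Curves}, Ch.~II, Prop.~1.5). Granting this, the lemma follows by applying $\psi_{E/K}$ to the norm relation $N_{KL/K}(\mathfrak{q}) = \mathfrak{p}^{f_{\mathfrak{q}/\mathfrak{p}}}$, which is a purely ideal-theoretic identity valid for any prime $\mathfrak{q}$ of $KL$ above $\mathfrak{p}$.

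First I would recall the characterising property of $\psi_{E/K}$: for a prime $\mathfrak{p}$ of good reduction coprime to the conductor, $\psi_{E/K}(\mathfrak{p})$ is the unique element of $\oo_K$ whose reduction modulo $\mathfrak{p}$ equals the Frobenius endomorphism of $\tilde{E}/\F_{\mathfrak{p}}$, where the identification $\End(\tilde{E})=\oo_K$ comes from reducing the CM action. This pins down $\psi_{E/K}(\mathfrak{p})$ uniquely up to the finitely many roots of unity in $\oo_K^{\times}$, and the sign/unit is fixed globally by the Hecke character condition on principal ideals.

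Second, I would verify the base-change identity on prime ideals of good reduction, which is the only case we need. For $\mathfrak{q} \mid \mathfrak{p}$ in $KL/K$, the residue field $\F_{\mathfrak{q}}$ is an extension of $\F_{\mathfrak{p}}$ of degree $f_{\mathfrak{q}/\mathfrak{p}}$, so the Frobenius endomorphism of $\tilde{E}/\F_{\mathfrak{q}}$ is the $f_{\mathfrak{q}/\mathfrak{p}}$-th power of the Frobenius of $\tilde{E}/\F_{\mathfrak{p}}$. Applying the characterising property above to both $\psi_{E/KL}(\mathfrak{q})$ and $\psi_{E/K}(\mathfrak{p})^{f_{\mathfrak{q}/\mathfrak{p}}}$ shows that they have the same reduction modulo $\mathfrak{q}$ and both lie in $\oo_K$; by the injectivity of $\oo_K \to \oo_K/\mathfrak{q} \cap \oo_K$ once one controls the unit ambiguity via the Hecke property, they are equal.

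The main obstacle I anticipate is handling the root-of-unity ambiguity in the local characterisation (particularly delicate for $K=\Q(\sqrt{-3})$, which has six roots of unity): one must check that the global Hecke character condition forces the unit in $\psi_{E/KL}(\mathfrak{q})/\psi_{E/K}(\mathfrak{p})^{f_{\mathfrak{q}/\mathfrak{p}}}$ to be $1$, not merely a unit whose reduction is $1$. This is essentially the content of the cited base-change theorem and follows because $\psi_{E/K}\circ N_{KL/K}$ is itself a Hecke character on $KL$ with the correct infinity type and the correct reduction on every unramified prime, so it must coincide with $\psi_{E/KL}$ by unicity.
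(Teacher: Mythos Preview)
Your approach is correct and coincides with the paper's: both reduce to the base-change identity $\psi_{E/KL}=\psi_{E/K}\circ N_{KL/K}$ together with $N_{KL/K}(\mathfrak{q})=\mathfrak{p}^{f_{\mathfrak{q}/\mathfrak{p}}}$. The paper's proof is a one-line citation to Silverman \emph{Advanced Topics} II.9.1 and II.3.5 (the definition of $\psi_E$ via the Artin map and norm-functoriality of the Artin map), so your Frobenius verification and discussion of the unit ambiguity are additional detail beyond what the paper supplies.
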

\begin{proof}
    This derives from the definition of the Hecke character $\psi_{E}$ and the compatibility of the Artin map with the norm map, see \cite[II.9.1, II.3.5]{silverman1994advanced}. 
\end{proof}
Finally, we need an elementary lemma before computing the transcendental Brauer group of $C \times C$ in our case of interest. 
\begin{lemma}{\label{juggling cubes}}
    Let $a,b,c$ be positive integers such that $abc$ is not a cube in $\Z$. Then, there exists a choice of $\lambda \in \{a/b,b/c, c/a\}$ such that $\Q(\sqrt[3]{abc}) \neq \Q(\sqrt[3]{\lambda})$.   \end{lemma}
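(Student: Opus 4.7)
The plan is to proceed by contradiction: assume that $\Q(\sqrt[3]{abc}) = \Q(\sqrt[3]{\lambda})$ for all three choices $\lambda \in \{a/b, b/c, c/a\}$, and derive that $abc$ must be a cube. The key input is the standard Kummer-theoretic fact that for $\alpha, \beta \in \Q^*$ with $\alpha$ not a cube, one has $\Q(\sqrt[3]{\alpha}) = \Q(\sqrt[3]{\beta})$ if and only if $\alpha$ and $\beta$ generate the same one-dimensional subspace of the $\F_3$-vector space $V := \Q^*/(\Q^*)^3$, equivalently $\beta \equiv \alpha^{\pm 1} \pmod{(\Q^*)^3}$. (Note that if some $\lambda$ were itself a cube then $\Q(\sqrt[3]{\lambda}) = \Q \neq \Q(\sqrt[3]{abc})$ and we would be done immediately, so we may assume all three $\lambda$ have nontrivial class in $V$.)

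Writing $V$ additively, let $\bar a, \bar b, \bar c \in V$ be the classes of $a, b, c$, and set $\sigma := \bar a + \bar b + \bar c$, which is the class of $abc$. The hypothesis that $abc$ is not a cube reads $\sigma \neq 0$. The three standing field equalities translate into
\[ \bar a - \bar b = \epsilon_1 \sigma, \qquad \bar b - \bar c = \epsilon_2 \sigma, \qquad \bar c - \bar a = \epsilon_3 \sigma \]
for some $\epsilon_i \in \{+1, -1\} \subset \F_3$.

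Summing the three equations, the left-hand side telescopes to zero, so $(\epsilon_1 + \epsilon_2 + \epsilon_3)\sigma = 0$ in $V$. Since $\sigma \neq 0$, this forces $\epsilon_1 + \epsilon_2 + \epsilon_3 \equiv 0 \pmod{3}$, which for values in $\{\pm 1\}$ is only possible when all three $\epsilon_i$ share a common sign. A short substitution in either case (for instance, when all $\epsilon_i = +1$, the first equation gives $\bar c = -2\bar b = \bar b$ and the second gives $\bar a = -2\bar c = \bar c$, using $-2 \equiv 1$ in $\F_3$) yields $\bar a = \bar b = \bar c$, whence $\sigma = 3\bar a = 0$, contradicting $\sigma \neq 0$. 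There is no serious obstacle here once the Kummer characterization of equal cubic fields is accepted; the remainder is clean linear algebra over $\F_3$, with the parity-style constraint on the $\epsilon_i$ being the only mildly delicate point.
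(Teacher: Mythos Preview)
Your proof is correct. Both you and the paper start from the same Kummer-theoretic characterization, namely that $\Q(\sqrt[3]{abc}) = \Q(\sqrt[3]{\lambda})$ if and only if $\lambda$ and $abc$ generate the same cyclic subgroup of $\Q^{\times}/\Q^{\times 3}$, so the essential input is identical. The difference is purely organizational: the paper unpacks the three choices of $\lambda$ into six explicit cube conditions (e.g.\ $b^2c \sim 1$, $a^2c \sim 1$, \dots) and runs a short constructive case analysis that actually names which $\lambda$ works in each scenario, whereas you argue by contradiction, encoding everything as the linear system $\bar a - \bar b = \epsilon_1\sigma$, $\bar b - \bar c = \epsilon_2\sigma$, $\bar c - \bar a = \epsilon_3\sigma$ over $\F_3$ and exploiting the telescoping sum. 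Your route is a bit cleaner and more symmetric; the paper's route has the minor advantage of being constructive, which is convenient since the lemma is invoked in the proof of Theorem~\ref{thm on C x C} precisely to \emph{choose} a particular $L = \Q(\sqrt[3]{\lambda})$.
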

\begin{proof}
First recall that  $\Q(\sqrt[3]{abc}) \neq \Q(\sqrt[3]{\lambda})$ if and only if $abc \not\sim \lambda$ and $abc \not\sim \lambda^2$ in $\Q^{\times}/\Q^{\times 3}$.
In particular, if $\lambda=a/b$, the conditions become $b^2c \not\sim 1$ and $a^2c \not\sim 1$. If $\lambda=b/c$, then the conditions become $ac^2\not\sim 1$ and $ab^2 \not\sim 1$. Finally, if $\lambda=c/a$, they are $a^2b \not\sim 1$ and $bc^2 \not\sim 1$. 

Notice that at least one between $b^2c \not\sim 1$ and $a^2c \not\sim 1$ is satisfied, since $abc \not \sim 1$. If they are both satisfied, choose $\lambda=a/b$. Otherwise let us start by assuming $b^2c \sim 1$ and therefore $a^2c \not\sim 1$; using again that $abc \not\sim 1$, we have $ab^2 \not\sim 1$, so $\lambda=b/c$ works. On the other hand, if we assume $a^2c \sim 1$ and therefore $b^2c \not\sim 1$, then $a^2b \not \sim 1$ and $\lambda= c/a$ works. 
\end{proof}

\begin{theorem}{\label{thm on C x C}}
    Let $C/\Q$ be a diagonal cubic curve $ax^3+by^3+cz^3=0$  such that $abc$ is not a cube in $\Q$. Then, \[\frac{\br(C \times C)}{\br_1(C \times C)}= \begin{cases}
    \Z/2\Z &\text{ if } 4abc \text{ is a cube in } \Q; \\
   0 &\text{ otherwise.}
\end{cases} \]
\end{theorem}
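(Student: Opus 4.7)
The plan is to establish the theorem one $l$-primary part at a time, combining three ingredients from the preceding paragraphs: the restriction-to-$L$ injection $\br(C\times C)/\br_1(C\times C) \hookrightarrow \br(E_L \times E_L)/\br_1(E_L \times E_L)$, Proposition~\ref{VAVisom} (to replace $C \times C$ with the trivial torsor $E \times E$ over $\Q$ on the part coprime to $3$), and Newton's explicit computations on CM abelian surfaces.

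For the part coprime to $3$, observe that the class $[C] \in \h^1(\Q,E)$ is killed by the cubic extension $L/\Q$, so componentwise $C \times C$ becomes a $3$-covering of the trivial torsor $E \times E$ over $\Q$. Proposition~\ref{VAVisom} then yields
\[ \frac{\br(C\times C)_n}{\br_1(C\times C)_n} \;\xrightarrow{\ \sim\ }\; \frac{\br(E\times E)_n}{\br_1(E\times E)_n} \qquad \text{for } (n,3)=1. \]
I would then apply Proposition~\ref{rachelexample} to the Jacobian $E: y^2 = x^3 + D$ with $D = -2^4 \cdot 3^3 (abc)^2$. After pulling out the cube factor $-6^3$, $D \sim -2(abc)^2$ in $\Q^{\times}/\Q^{\times 3}$, which is a cube iff $abc = 2j^3$ for some $j \in \Z$, equivalently iff $4abc$ is a cube; likewise $4D \sim -(abc)^2$ is a cube iff $abc$ is a cube, excluded by hypothesis. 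So the part coprime to $3$ is $\Z/2\Z$ exactly when $4abc$ is a cube, and $0$ otherwise.

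For the $3$-primary part, it suffices (by the injection) to show that $(\br(E_L\times E_L)/\br_1(E_L\times E_L))\{3\}=0$. By Theorem~\ref{main thm Rachel} this group is $\Z/3^{m(3)}$, where $m(3)$ is the largest $k$ with $\psi_{E/KL}(\qq)\in\Z+3^k\oo_K$ for every good prime $\qq$ of $KL$ coprime to $3$. Over $\Q$ Proposition~\ref{rachelexample} already gives $m_{\Q}(3)=0$, so the cubic abelian extension $M$ of $K$ cut out by $\psi_{E/K} \pmod{3\oo_K}$ is a proper extension of $K$, isolated by an explicit computation in the $3$-division field of $y^2=x^3-432(abc)^2$. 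I would use Lemma~\ref{juggling cubes} to pick $\lambda\in\{a/b,b/c,c/a\}$ so that the Kummer extension $KL=K(\sqrt[3]{\lambda})$ differs from $M$; by Chebotarev in the compositum $M\cdot KL$ there is a prime $\pp$ of $K$, good and coprime to $3$, that simultaneously splits in $KL/K$ and satisfies $\psi_{E/K}(\pp)\notin\Z+3\oo_K$. Then Lemma~\ref{grossencharacter} produces a prime $\qq\mid\pp$ of $KL$ witnessing $\psi_{E/KL}(\qq)=\psi_{E/K}(\pp)\notin\Z+3\oo_K$, forcing $m(3)=0$.

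The main obstacle I foresee is the $3$-primary analysis: precisely identifying the cubic extension $M$ as $K(\sqrt[3]{\delta})$ for some $\delta$ expressible in terms of $abc$, so that Lemma~\ref{juggling cubes} genuinely forces $M\neq KL$, and then executing the Chebotarev argument matching the splitting and Hecke-character conditions. The reduction away from $3$ and the elementary cube bookkeeping feeding Proposition~\ref{rachelexample} are, by contrast, essentially mechanical.
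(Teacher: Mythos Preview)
Your approach is essentially the paper's: reduce the prime-to-$3$ part to $E\times E$ via Proposition~\ref{VAVisom} and feed Proposition~\ref{rachelexample} (your cube bookkeeping is correct), then kill the $3$-part over $KL$ by exhibiting a prime $\qq$ with $\psi_{E/KL}(\qq)\notin\Z+3\oo_K$ using Lemma~\ref{grossencharacter} and a Chebotarev-type argument after choosing $L$ via Lemma~\ref{juggling cubes}. The obstacle you flag is exactly what the paper dispatches: the explicit formula $\psi_{E/K}(\pp)=\legendre{4D}{\pi_\pp}_6^{-1}\pi_\pp$ from \cite[II.10.6]{silverman1994advanced} identifies your field $M$ as $K(\sqrt[3]{4D})=K(\sqrt[3]{abc})$, so Lemma~\ref{juggling cubes} (which is literally the statement $\Q(\sqrt[3]{\lambda})\neq\Q(\sqrt[3]{abc})$) guarantees $KL\neq M$, and then one chooses $\pp$ split in $KL/K$ but inert in $K(\sqrt[3]{4D})/K$.
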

\begin{proof}
    If $C(\Q) \neq \emptyset$, then $C \cong E= \Jac(C)$ and therefore Proposition \ref{rachelexample} applies directly with $D=-2^43^3a^2b^2c^2$. We can therefore assume that $C(\Q) = \emptyset$; the proof is still a modification of the reasoning in \cite[Example\ 3]{rachel2015}.
    
    The statement for the $l$-primary part of the transcendental Brauer group for $l \neq 3$ is a direct implication of Proposition \ref{VAVisom} and Proposition \ref{rachelexample}.  
    Again let $D=-2^43^3a^2b^2c^2$ and let $\mathfrak{q}$ be a prime in $KL$ coprime to $D$; denote by $\pi_{\mathfrak{p}}$ the unique generator of $\mathfrak{p}=\mathfrak{q} \cap \oo_K$ such that $\pi_{\mathfrak{p}} \equiv 1 \pmod{3}$. Then, Lemma 
    \ref{grossencharacter} and \cite[Example II.10.6]{silverman1994advanced} show that \[ \psi_{E/KL}(\mathfrak{q})= \bigg[  \legendre{4D}{\pi_{\pp}}^{-1}_6 \cdot \pi_{\pp}  \bigg]^{f_{\mathfrak{q}/\mathfrak{p}}} , \]
    where $\legendre{\cdot}{\cdot}_6$ denotes the sextic residue symbol on $\Z[\zeta_3]$. 

    Let $l=3$.  We have assumed that $4D$ is not a cube in $\Z$. Moreover, choose $L=\Q(\sqrt[3]{\lambda})$ given by Lemma \ref{juggling cubes}, so that $KL \neq K(\sqrt[3]{4D})$ (indeed fixed an embedding of $KL$ into $\C$, $L$ is the only real subfield of degree 3 over $\Q$). Since $4D$ is not a cube in $K$, then by \cite[Ex 6.1]{cassels2010algebraic} there are infinitely many primes $\mathfrak{p}$ of $K$ such that $4D$ is not a cube mod $\mathfrak{p}$, which is equivalent to $\mathfrak{p}$ being inert in $K(\sqrt[3]{4D})/K$. Again by \cite[Ex 6.1]{cassels2010algebraic}, we can also assume that $\mathfrak{p}$ is split in $KL/K$, since $KL \neq K(\sqrt[3]{4D})$. Given all these choices, we have found a prime $\qq$ in $KL$ above $\pp$  such that $f_{\qq/\pp}=1$ and $\legendre{4D}{\pi_{\pp}}_6 \neq \pm 1$, therefore  $\psi_{E/KL}(\mathfrak{q}) \not\in \oo_3=\Z+3\oo_K$, giving $m(3)=0$ in Theorem \ref{main thm Rachel}.   
\end{proof}

\begin{corollary}\label{main corollary}
    Let $C/\Q$ be a diagonal cubic curve $ax^3+by^3+cz^3=0$  such that $abc$ is not a cube in $\Q$. Then, 
   \[\frac{\br(Y_C)}{\br_0(Y_C)}= \begin{cases}
    \Z/2\Z &\text{ if } 4abc \text{ is a cube in } \Q; \\
   0 &\text{ otherwise.}
\end{cases} \]
\end{corollary}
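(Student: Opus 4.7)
The plan is to combine the two main computations of the preceding sections---Theorem~\ref{main_thm} and Theorem~\ref{thm on C x C}---with Van Luijk's computation of the algebraic Brauer group. First I would invoke \cite[Prop~4.2]{vanluijk2007cubic}, which under the standing assumption that $abc$ is not a cube in $\Q$ gives $\br_1(Y_C) = \br_0(Y_C)$. Consequently $\br(Y_C)/\br_0(Y_C)$ coincides with the transcendental quotient $\br(Y_C)/\br_1(Y_C)$, so it suffices to compute the latter.

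I would then split the argument into its prime-to-$3$ part and its $3$-primary part. For any $n$ coprime to $3$, Theorem~\ref{main_thm} provides an isomorphism
\[ \frac{\br(Y_C)_n}{\br_1(Y_C)_n} \xrightarrow{\sim} \frac{\br(C \times C)_n}{\br_1(C \times C)_n}, \]
and passing to the union over all such $n$ identifies the prime-to-$3$ torsion of $\br(Y_C)/\br_1(Y_C)$ with that of $\br(C \times C)/\br_1(C \times C)$. Theorem~\ref{thm on C x C} computes the right-hand side as $\Z/2\Z$ if $4abc$ is a cube in $\Q$ and as $0$ otherwise; both possibilities are already coprime to $3$, and therefore agree verbatim with the formula stated in the corollary.

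For the $3$-primary part, Theorem~\ref{main_thm} only supplies a one-sided embedding $\br(Y_C)_{3^k}/\br_1(Y_C)_{3^k} \hookrightarrow \br(C\times C)_{3^k}/\br_1(C\times C)_{3^k}$. The key observation is that the target is trivial: by Theorem~\ref{thm on C x C}, $\br(C \times C)/\br_1(C \times C)$ is either $\Z/2\Z$ or $0$, and in either case its $3$-primary component vanishes. Hence the $3$-primary part of $\br(Y_C)/\br_1(Y_C)$ also vanishes, and the whole transcendental Brauer group is determined by its prime-to-$3$ part, which gives exactly the claimed formula.

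The argument is essentially bookkeeping and I do not expect a genuine obstacle; the only subtlety worth flagging is that Theorem~\ref{main_thm} produces an isomorphism only away from $3$, so the $3$-primary part of $\br(Y_C)/\br_1(Y_C)$ must be controlled separately. Without the explicit vanishing of the $3$-primary part of $\br(C\times C)/\br_1(C\times C)$ supplied by Theorem~\ref{thm on C x C}, the one-sided embedding alone would not be enough to rule out $3$-torsion on the $Y_C$ side.
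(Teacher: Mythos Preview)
Your proposal is correct and follows the same approach as the paper's proof, which simply reads ``By comparing Theorems \ref{main_thm} and \ref{thm on C x C}, we obtain $\br(Y_C)/\br_1(Y_C)$ and by \cite[Prop.~4.2]{vanluijk2007cubic}, we have that $\br(\Q) \to \br_1(Y_C)$ is surjective.'' You have spelled out exactly what that comparison entails, including the point that the embedding of Theorem~\ref{main_thm} at $n=3^k$ suffices to kill the $3$-primary part because Theorem~\ref{thm on C x C} leaves no room for $3$-torsion on the target side.
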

\begin{proof}
    By comparing Theorems \ref{main_thm}  and \ref{thm on C x C}, we obtain $\br(Y_C) / \br_1(Y_C)$ and by \cite[Prop.\ 4.2]{vanluijk2007cubic}, we have that $\br(\Q) \to \br_1(Y_C)$ is surjective.
\end{proof}

\begin{remark}
This result agrees with  V\'arilly-Alvarado's conjecture on the uniform boundness of the Brauer groups of K3 surfaces, see \cite[5.5]{anthonyconj}. Moreover, the cardinality of the transcendental Brauer group of $Y_C$ can be bounded depending only on the base field $k$, thanks to the bound of $n(l)$, given by class field theory as discussed in \cite[Remark\ 1]{rachel2015}. Indeed, V\'arilly-Alvarado's conjecture has been proven for CM K3 surfaces (such as our case) in \cite{Orr_Skorobogatov_2018}.
\end{remark}

\section{On the Hasse principle}

We can now discuss the implications of the computation of the Brauer group of $Y_C$ assuming Skorobogatov's conjecture for these K3 surfaces.

\begin{conj}[Skorobogatov, \cite{skorobogatov_2009}] \label{skorobogatovconj}
The Brauer--Manin obstruction is the only obstruction to the Hasse principle for algebraic K3 surfaces over number fields.    
\end{conj}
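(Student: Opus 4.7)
This is Skorobogatov's K3 conjecture, which is a major open problem in arithmetic geometry, so any proof proposal is necessarily speculative rather than a concrete plan. The broad framework would mirror the proofs available for other families where the Brauer--Manin obstruction is known or conjectured to be the only obstruction: establish a suitable descent (via torsors under finite \'etale group schemes) and then reduce the Hasse principle question on a K3 surface $X$ to the same question on families of auxiliary varieties for which one has stronger geometric control. For K3 surfaces one would also hope to make use of the fact that, by the Bombieri--Lang philosophy, rational points are expected to be potentially dense, so the problem is morally about locating them rather than producing them from scratch.

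The first concrete approach I would try is to exploit the rich geometry of K3 surfaces, in particular the presence of many elliptic fibrations. If $X$ admits an elliptic fibration $X \to \Proj^1$, then one hopes that an adelic point orthogonal to $\br(X)$ survives to produce a rational point on the base over which the fibre has a rational point; this is the fibration method, developed extensively for conic and elliptic fibrations by Colliot-Th\'el\`ene, Sansuc, Swinnerton-Dyer, Skorobogatov and others, typically conditional on Schinzel's hypothesis and the finiteness of Tate--Shafarevich groups of elliptic curves. A realistic strategy is therefore to combine these conjectures with a careful analysis of the vertical Brauer group of the fibration, along the lines of the conditional results known for pencils of curves of higher genus, and to patch together the outputs over sufficiently many distinct fibrations in order to cover all adelic points in the Brauer--Manin set.

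The main obstacle, beyond the reliance on Schinzel's hypothesis and $\Sha$-finiteness, is that a general K3 surface need not admit an elliptic fibration defined over the base number field, and even when it does, the transcendental Brauer group (the object of study in the present paper) is notoriously difficult to make explicit, let alone to control adelically. Moreover, one does not have a satisfactory descent theory for K3 surfaces analogous to the torsor-descent available for geometrically rationally connected varieties, since the relevant universal torsors do not exist in the required sense. For these reasons I would expect any realistic attack on Conjecture \ref{skorobogatovconj} to proceed first for K3 surfaces with additional structure (Kummer, generalised Kummer, or CM K3 surfaces), and the present paper fits squarely into that programme by extracting concrete consequences from the conjecture on $Y_C$ rather than attempting the conjecture itself.
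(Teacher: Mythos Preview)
Your assessment is exactly right: this statement is a conjecture, not a theorem, and the paper makes no attempt to prove it. It is stated purely as a hypothesis to be invoked in Theorems \ref{galoiscubicpoints} and \ref{NoObstructionThm}, where the paper draws conditional conclusions about Galois cubic points on $C$ assuming the conjecture holds for $Y_C$. So there is no proof in the paper to compare your proposal against, and you were correct to treat any ``proof proposal'' as necessarily speculative.

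Your sketch of possible approaches (elliptic fibrations, the fibration method conditional on Schinzel and $\Sha$-finiteness, descent) is a fair survey of the standard toolkit, and your identification of the obstacles---absence of fibrations over the ground field in general, lack of a good torsor theory, difficulty of the transcendental Brauer group---is accurate. Your closing observation, that realistic progress proceeds through K3 surfaces with extra structure and that the present paper contributes to that programme by extracting consequences rather than attacking the conjecture, is precisely the role Conjecture \ref{skorobogatovconj} plays here.
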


\begin{theorem}\label{galoiscubicpoints}
    Let $C/\Q$ be an everywhere locally soluble smooth curve with equation $ax^3+by^3+cz^3=0$. Then, Skorobogatov's conjecture implies the existence of Galois cubic points on $C$. 
\end{theorem}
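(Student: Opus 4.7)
My plan is to apply Skorobogatov's Conjecture \ref{skorobogatovconj} to the K3 surface $Y_C$, deduce $Y_C(\Q) \neq \emptyset$, and then descend a rational point of $Y_C$ to a Galois cubic point on $C$. First I would dispose of the easy case $C(\Q) \neq \emptyset$: any $\Q$-rational point of $C$ is automatically $L$-rational for every Galois cubic extension $L/\Q$, yielding Galois cubic points for free. Henceforth I assume $C(\Q) = \emptyset$.

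To apply the conjecture I need $Y_C(\mathbb{A}_{\Q})^{\br} \neq \emptyset$. By Theorem \ref{NoObstructionThm} there is no Brauer--Manin obstruction, so it suffices to show $Y_C$ is everywhere locally soluble. For each place $v$ of $\Q$, local solubility of $C$ gives that $C(\Q_v)$ is a nonempty $v$-adic analytic manifold of positive dimension, hence infinite; therefore $(C \times C)(\Q_v)$ contains points outside the nine-point fixed locus $S_3$ of $\rho$. Such points lie in $S_0(\Q_v)$ and descend to $Y_0(\Q_v) \subset Y_C(\Q_v)$. Conjecture \ref{skorobogatovconj} then produces $y \in Y_C(\Q)$.

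Next I would argue $y \in Y_0(\Q)$. Geometrically, $Y_C \setminus Y_0$ is a disjoint union of nine pairs of smooth rational curves, one pair above each inflection point $P$ of $C$, and $\Gamma_{\Q}$ permutes the nine pairs via its action on the inflection locus. If $y$ belonged to a pair above some $P$, the Galois-invariance of the singleton $\{y\}$ together with the pairwise disjointness of the pairs in $Y_C$ would force that specific pair, and hence $P$, to be $\Q$-rational, contradicting $C(\Q) = \emptyset$. Thus $y \in Y_0(\Q) = (S_0/\rho)(\Q)$.

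Finally, the fiber of the $\langle \rho \rangle$-cover $\pi : S_0 \to Y_0$ over $y$ is a $\Gamma_{\Q}$-stable $\rho$-orbit of size exactly $3$, since $S_0$ excludes the $\rho$-fixed locus. Writing it as $\{Q, \rho Q, \rho^2 Q\}$, any $\Gamma_{\Q}$-fixed element would be a $\Q$-point of $C \times C$ whose coordinates lie in $C(\Q)$, again contradicting our hypothesis; so the $\Gamma_{\Q}$-action on this three-element set is fixed-point-free and, as a subgroup of $S_3$, must be cyclic of order $3$. It follows that $L := \Q(Q)$ is a cyclic cubic extension of $\Q$, and projecting $Q \in (C \times C)(L)$ to either factor yields the desired point in $C(L)$. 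The main obstacle is the geometric step ruling out rational points on the exceptional locus of $Y_C \to X_C$; once that is in place, everything else is clean descent along the degree-three Galois cover $S_0 \to Y_0$ using the hypothesis $C(\Q) = \emptyset$.
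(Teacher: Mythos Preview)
Your descent argument from a $\Q$-point on $Y_C$ down to a Galois cubic point on $C$ is correct and in fact more explicit than the paper's, which simply quotes Van Luijk's dichotomy for $k$-points of $S/\rho$. There is, however, a gap earlier: you invoke Theorem~\ref{NoObstructionThm}, whose hypotheses include that $abc$ is \emph{not} a rational cube, without verifying this. When $abc$ is a cube neither Corollary~\ref{main corollary} nor Theorem~\ref{NoObstructionThm} applies, and indeed $\br_1(Y_C)/\br_0(Y_C)$ need not vanish in that range (cf.\ \cite[Prop.~4.2]{vanluijk2007cubic}), so you cannot conclude absence of obstruction as written. The repair is precisely the paper's device: if $abc$ (or $4abc$) is a cube and $C$ is everywhere locally soluble, a classical infinite descent shows $C(\Q)\neq\emptyset$ unconditionally (see \cite[Remark~4.3]{vanluijk2007cubic}); hence under your running assumption $C(\Q)=\emptyset$ the case $abc$ a cube is vacuous and the hypothesis of Theorem~\ref{NoObstructionThm} holds automatically. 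You should insert this sentence.

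The organisation is also genuinely different. The paper case-splits on whether $abc$ or $4abc$ is a cube: when neither is, $\br(Y_C)/\br_0(Y_C)=0$ by Corollary~\ref{main corollary} and Skorobogatov's conjecture gives the Hasse principle for $Y_C$ directly; when either is, infinite descent on $C$ already gives $C(\Q)\neq\emptyset$. In particular the paper never uses the evaluation-map computation of Theorem~\ref{NoObstructionThm} for this theorem. Your route instead folds the $4abc$-cube case into Theorem~\ref{NoObstructionThm}, trading an elementary descent on $C$ for the heavier Brauer--Manin analysis on $Y_C$; the benefit is a uniform argument once $C(\Q)=\emptyset$, the cost is more machinery and a forward reference. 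Note also that the \emph{last} clause of Theorem~\ref{NoObstructionThm} cites the proof of the present theorem, so be explicit that you are using only its first assertion (non-emptiness of the Brauer--Manin set), which is logically independent.
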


\begin{proof}
Section 2 in $\cite{vanluijk2007cubic}$ describes how to relate $k$-rational points on $Y_C$ and $C$, for a field $k$ of characteristic zero. First, the surface $Y_C$ has a $k$-rational point if and only if $S/\rho$ does. 
Secondly, a $\bar{k}$-point of $S/\rho$, described by the orbit
 $\{(P,Q), (Q,R), (R,P) \}$, is $k$-rational if either  
\begin{itemize}
    \item $P,Q,R$ are $k$-rational collinear points on $C$, or 
    \item $P,Q,R$ are collinear $l$-rational points on $C$, where $l/k$ is a Galois cubic extension and $P,Q,R$ are permuted by $\Gal(l/k)$.
\end{itemize}     
If neither $abc$ nor $4abc$ is a cube in $\Q$, then Corollary \ref{main corollary} and Conjecture \ref{skorobogatovconj} imply the Hasse principle for $Y_C$, which implies the existence of Galois cubic points on $C$. If $abc$ or $4abc$ is a cube in $\Q$, a simple infinite descent argument implies that the everywhere locally soluble $C$ must have a rational point, see \cite[Remark 4.3]{vanluijk2007cubic} for details.
\end{proof}

Let us finally consider the case where $Y_C$ is everywhere locally soluble, which, as explained in the proof of Theorem \ref{galoiscubicpoints}, is a weaker assumption than $C$ being everywhere locally soluble. We are able to obtain the same result as in Theorem \ref{galoiscubicpoints}, assuming that $abc$ is not a rational cube. 

\begin{theorem} \label{NoObstructionThm}
Let $C/\Q$ be a smooth curve with equation $ax^3+by^3+cz^3=0$, with $abc$ not a rational cube, such that the surface $Y_C$ is everywhere locally soluble. Then, there is no Brauer--Manin obstruction to the Hasse principle on $Y_C$. Consequently, Skorobogatov's conjecture implies the existence of Galois cubic points on $C$.   
\end{theorem}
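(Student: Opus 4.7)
The plan is to split the analysis into two cases depending on whether $4abc$ is a cube in $\Q$.

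If $4abc$ is not a cube, Corollary \ref{main corollary} gives $\br(Y_C)/\br_0(Y_C) = 0$, so the Brauer--Manin set coincides with $Y_C(\mathbb{A}_{\Q})$, which is non-empty by hypothesis; hence no Brauer--Manin obstruction arises, and we are done in this sub-case.

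If $4abc$ is a cube, write $abc = 2t^3$ for some $t \in \Q^{\times}$. The substitution $x = t^2 X$, $y = t^3 Y$ then gives a $\Q$-isomorphism between $E = \Jac(C) \colon y^2 = x^3 - 432(abc)^2$ and the Fermat Jacobian $E_0 \colon Y^2 = X^3 - 1728$; in particular $E(\Q)$ is non-trivial and contains the rational $2$-torsion point $(12t^2, 0)$. By Corollary \ref{main corollary}, $\br(Y_C)/\br_0(Y_C) \cong \Z/2\Z$, generated by some class $\alpha$. Applying Theorem \ref{theorem1.1} together with Proposition \ref{VAVisom}, $\alpha$ is determined, modulo algebraic and constant parts, by the generator of $\br(E \times E)_2/\br_1(E \times E)_2$ described by Newton in \cite[Example~3]{rachel2015} as an explicit Azumaya algebra coming from the CM structure on $E_0$.

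The heart of the proof is then to use this description to compute the local invariants $\mathrm{inv}_v(\alpha(P_v))$ for an arbitrary adelic point $(P_v) \in Y_C(\mathbb{A}_{\Q})$. Because $\alpha$ is $2$-torsion and stems from the CM structure on the Fermat Jacobian, the invariants can be phrased in terms of local Hilbert symbols in the coordinates of $P_v$. I would then argue dichotomously: either the function $P_v \mapsto \mathrm{inv}_v(\alpha(P_v))$ is non-constant at some place $v$, in which case one perturbs the single component $P_v$ to force $\sum_v \mathrm{inv}_v(\alpha(P_v)) = 0$; or else the local invariant is constant at every place, in which case the global sum equals the value obtained by evaluating the companion class on a $\Q$-rational point of $E \times E$ (which exists by the Fermat normal form above), and this vanishes by the product formula for Hilbert symbols. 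Either way $Y_C(\mathbb{A}_{\Q})^{\br}$ is non-empty. The consequence for Galois cubic points is then immediate: under Skorobogatov's conjecture, the absence of Brauer--Manin obstruction combined with everywhere local solubility of $Y_C$ yields $Y_C(\Q) \neq \emptyset$, and the dictionary from \cite[\S 2]{vanluijk2007cubic} produces either three collinear $\Q$-rational points on $C$ (a fortiori Galois cubic) or three collinear points defined over a Galois cubic extension $L/\Q$.

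The main obstacle is the explicit local invariant computation in the non-trivial case, since the transcendental class $\alpha$ must be tracked through the blow-up $S' \to Y_C$ of Proposition \ref{geometry-bs} and through the $3$-covering $C \times C \to E \times E$ (defined a priori only over the cubic field $L = \Q(\sqrt[3]{a/b})$), and then one must verify that the resulting Hilbert symbols assemble, via global reciprocity, into a vanishing invariant sum.
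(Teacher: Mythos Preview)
Your overall strategy aligns with the paper's: the same case split on whether $4abc$ is a cube, the same identification of the nontrivial transcendental class via Newton's description, and the same intention to show that the local evaluation map is non-constant somewhere. Where you leave a placeholder (``the heart of the proof''), the paper carries out the computation at the prime $p=2$. Since $4abc$ is a cube, one checks directly that $C(\Q_2)\neq\emptyset$; hence over $\Q_2$ the $3$-covering $C\to E$---which is defined over $\Q$, not merely over $L$ as you suggest---trivialises to $[3]\colon E_{\Q_2}\to E_{\Q_2}$. Writing $E$ as $y^2=x^3-27$ (your Fermat normalisation, up to scaling), the class corresponds to the quaternion algebra $(x-3,u-3)$ on $E\times E$; it evaluates trivially at the origin and to $(3,3)$ at the $2$-torsion point $\bigl((3,0),(3,0)\bigr)$, and $(3,3)$ is ramified at $2$. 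One then chases these two $\Q_2$-points through $C\times C\to E\times E$, $S'\to C\times C$, and $S'\to Y$ to conclude that $\mathrm{ev}_\beta\colon Y(\Q_2)\to\Z/2\Z$ is surjective. This is precisely your first dichotomy branch, and it already finishes the proof.

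Your second dichotomy branch, as stated, has a gap. The transcendental class on $Y_C$ is only determined modulo $\br_0(Y_C)$ by its image in $\br(E\times E)/\br_1(E\times E)$, so evaluating a companion class at a global rational point of $E\times E$ does not compute the constant local values of $\alpha$ on $Y_C(\Q_v)$: the two could differ by an arbitrary constant algebra, and reciprocity on $E\times E$ says nothing about the sum on $Y_C$. Fortunately this branch is never needed, since the explicit computation at $2$ shows the evaluation is non-constant there.
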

\begin{proof}
If $4abc$ is not a rational cube, then the result follows from Corollary \ref{main corollary}. Assume now that $4abc$ is a rational cube. Then Corollary \ref{main corollary} shows that there is only one nontrivial element $\beta$ of $\br(Y_C)/\br_0(Y_C)$ whose evaluation maps need to be studied. By \cite[Prop. 2.12]{rachel2015}, the element $\beta$ corresponds to the nontrivial element in $\br(\bar{E} \times \bar{E})_{2^{\infty}}^{\Gamma_{\Q}} \cong \Z/2\Z$. The discussion in \cite[\S 3]{skorobogatov2010brauer} describes explicit generators of $\br(\bar{E} \times \bar{E})_2$ as Azumaya algebras. In our case, we claim that $\beta$ corresponds to the Azumaya algebra $(x-3, u-3) \in \br(\bar{E} \times \bar{E})_{2}^{\Gamma_{\Q}} $, where the two components of $E \times E$ are given by $y^2=x^3-27$ and $v^2=u^3-27$. The nontriviality of this element will be a consequence of the surjectivity of the evaluation map at the prime 2. 

First notice that the evaluation of $\beta$ at the origin of $E \times E$ is trivial. Now consider the point $P=(3,0)$, which is the only nontrivial rational 2-torsion point of $E$. The evaluation of $\beta=(x-3, u-3)=(x^2+3x+9, u^2+3u+9)$ at $(P,P)$ is the quaternion algebra $(3,3)$ which ramifies at the primes $p=2,3$. We have then shown that $ev_{\beta}: (E \times E) (\Q_2) \to \Z/2$ is surjective; the strategy now is to chase the points via the natural maps to prove that the surjectivity holds for $Y=Y_C$ as well. We will adopt a slight abuse of notation by denoting with $\beta$ the  Brauer group element of interest regardless of it living inside $\br(C \times C), \br(E \times E), \br(S')$, or $\br(Y)$. This is justified by the discussion of these Brauer groups in Sections 3 and 4.   

Note that $C(\Q_2) \neq \emptyset$; this is a simple consequence of the fact that every $t \in \Z_2^{\times}$ is a cube in $\Z_2$ and the assumption that $4abc$ is a rational cube. Hence, there exists a $\Q_2$-isomorphism of $C_{\Q_2}$ and $E_{\Q_2}$ which makes the 3-covering map $C \to E$ into the multiplication $[3]: C_{\Q_2} \to E_{\Q_2}$; the same holds for $C \times C$ and $E \times E$. Since $\beta$ has order 2, it is preserved by the induced multiplication map on the Brauer groups, and, similarly, the origin and $(P,P)$ as $\Q_2$-rational points of $(E \times E)_{\Q_2}$ are also preserved by the map $[3]$. Therefore, the evaluation map $ev_{\beta}: (C \times C)(\Q_2) \to \Z/2$ is surjective. The map on Brauer groups induced by $S' \to C \times C$ preserves $\beta$ and therefore $ev_{\beta}: S'(\Q_2) \to \Z/2$ is also surjective. Indeed, in the case where the locus $S_3$ of the blow-up has a $\Q_2$-rational point, the exceptional divisor above that point is a pair of genus 0 curves intersecting in a $\Q_2$-rational point, so it is sufficient to evaluate at that point instead. Finally, the map on Brauer groups induced by $S' \to Y$ preserves $\beta$ by Propositions \ref{galois_covering} and \ref{geometric_brauer} and therefore $ev_{\beta}: Y(\Q_2) \to \Z/2$ is also surjective. The surjectivity of the evaluation map at a single prime is sufficient to show the non-emptiness of the Brauer--Manin set. See the discussion of rational points on $Y$ in the proof of Theorem \ref{galoiscubicpoints} for the statement on the existence of Galois cubic points on $C$.     
\end{proof}

\section*{Acknowledgements}
I would like to express my gratitude to my supervisor Rachel Newton for introducing me to this topic and project, giving me careful supervision and proposing several helpful comments on the first draft of the paper.
I would also like to thank Alexei Skorobogatov for helpful suggestions, as well as Dario Beraldo, Federico Bongiorno and Riccardo Calini for their patience in our algebraic geometry discussions.  
Last but not least, I really appreciated the stimulating and relaxed research atmosphere in Rachel's group. Special thanks to Eric Zhu for carefully checking the first draft of the preprint. 
This work was supported by the Engineering and Physical Sciences Research Council [EP/S021590/1], the EPSRC Centre for Doctoral Training in Geometry and Number Theory (The London School of Geometry and Number Theory), University College London.

\bibliographystyle{alpha}

\bibliography{refs}

\end{document}